\newtheorem{corollary}{Corollary}[section]
\newtheorem{lemma}{Lemma}[section]
\newtheorem{proposition}{Proposition}[section]
\newtheorem{theorem}{Theorem}[section]
\theoremstyle{definition}
\newtheorem{example}{Example}[section]
\newtheorem{definition}{Definition}[section]
\newcommand{\npmatrix}[1]{\left( \begin{matrix} #1 \end{matrix} \right)}
\newcommand{\R}{\mathbb{R}}
\newcommand{\mc}{\mathcal}
\newcommand{\rk}{\mathrm{rk}}
\newcommand{\rank}{\mathrm{rank}}
\newcommand{\spr}{\mathrm{st}_+}
\newcommand{\sprp}{\mathrm{st}_+^{>}}
\newcommand{\cp}{\mathrm{cp}}
\newcommand{\In}{\mathrm{In}}
\DeclareMathOperator{\supp}{supp}
\DeclareMathOperator{\diag}{diag}
\DeclareMathOperator{\vc}{vec}
\newcommand{\norm}[1]{\ensuremath{\left\|{#1}\right\|}}
\definecolor{todo}{rgb}{.2,.2,.8}
\definecolor{damjana}{rgb}{.8,.2,.2}
\definecolor{helena}{rgb}{.2,.2,.8}
\begin{document}

\begin{frontmatter}



\title{\bf Symmetric Nonnegative Matrix Trifactorization}


\author[inst1,inst2]{Damjana Kokol Bukov\v{s}ek}

\affiliation[inst1]{organization={School of Economics and Business, 
University of Ljubljana},
           country={Slovenia}}
           
\affiliation[inst2]{organization={Institute of Mathematics Physics and Mechanics},
            city={Ljubljana},
           country={Slovenia}}

\author[inst3]{Helena \v{S}migoc}

\affiliation[inst3]{organization={School of Mathematics and Statistics, University College Dublin},
           country={Ireland}}

\begin{abstract}
The Symmetric Nonnegative Matrix Trifactorization (SN-Trifactorization) is a factorization of an $n \times n$ nonnegative symmetric matrix $A$ of the form $BCB^T$, where $C$ is a $k \times k$ symmetric matrix, and both $B$ and $C$ are required to be nonnegative. This work introduces the SNT-rank of $A$, as the minimal $k$, for which such factorization exists.
After listing basic properties and exploring SNT-rank of low rank matrices, the class of nonnegative symmetric matrices with SNT-rank equal to rank is studied. The paper concludes with a completion problem, that asks for matrices with the smallest possible SNT-rank among all nonnegative symmetric matrices with given diagonal blocks. 
\end{abstract}

\begin{keyword}
Nonnegative Matrix Factorization; Completely Positive Matrices; Nonnegative Symmetric Matrices; Nonnegative Rank
\MSC 15A23 \sep 15B48
\end{keyword}

\end{frontmatter}

\section{Introduction and Notation}

Factorizations of matrices, where the factors are required to be entry-wise nonnegative, have seen a lot of attention in the recent years, since they provide a powerful tool in analysing nonnegative data. In parallel with applications, theoretical study of those factorizations is a vibrant topic of research, that supports development of applications. 
In this work, we consider a factorization of nonnegative  symmetric matrices, which takes into account  symmetry, nonnegativity and low rank of a matrix. 

Throughout the paper we depend on predominantly standard notation listed below.  By $\R_+$ we denote the set of nonnegative real numbers, by $\R^{n \times m}$ the set of $n \times m$ real matrices, and by $\R_+^{n \times m}$ the set of  $n \times m$ entry-wise nonnegative matrices. Our investigation is focused on symmetric nonnegative matrices. For simplicity, we will generally assume that our matrices are irreducible. To this end we define
\begin{align*}
\mc S_n^+ &= \{A \in \R_+^{n \times n}; A = A^T \},\\
\hat{\mc S}_n^+ &= \{A \in \mc S_n^+; A \text{ is irreducible}\}. 
\end{align*}

Given $\alpha_i \in \R$, $i=1,\ldots, n$,  we denote by  $\diag\npmatrix{\alpha_1 & \alpha_2 & \ldots & \alpha_n}$ the diagonal matrix with diagonal entries $\alpha_1, \alpha_2, ..., \alpha_n$. Let $A, B \in \R^{m \times n}$. Then $A \circ B$ denotes the Hadamard product of $A$ and $B$ defined by $(A \circ B)_{ij} = A_{ij}B_{ij}$. For subsets ${\cal R} \subseteq \{1, 2, \ldots, m\}, {\cal C} \subseteq \{1, 2, \ldots, n\}$, $A[{\cal R}, {\cal C}]$ denotes the submatrix of $A$ containing entries $A_{ij}$ with $i \in {\cal R}, j \in {\cal C}$ and $A[{\cal R},\_]$ the submatrix $A[{\cal R}, \{1, 2, \ldots, n\}]$. Let $c_i(A)$ denote the $i$-th column of $A$, then $\vc(A) = \left(\begin{array}{cccc}
 c_1(A)^T & c_2(A)^T & \dots & c_n(A)^T 
\end{array}\right)^T \in \R^{mn}$ is the vector with columns of $A$ arranged in a long column.

Two well known factorizations that feature nonnegative factors are Nonnegative Matrix Factorization and Completely Positive Factorization. We briefly introduce both factorizations below. 

\subsection{Nonnegative Matrix Factorization} 

Given a nonnegative $n \times m$ matrix $A$ and a positive integer $k$, \emph{the Nonnegative Matrix Factorization (NM-Factorization)} consists of finding matrices $U \in \R_+^{n\times k}$ and $V \in \R_+^{m \times k}$ so that $UV^T$ approximates $A$. The most frequently used measure of approximation is the Frobenius norm, hence the goal is to find $U$  and $V$ that minimise $\norm{A-UV^T}_F$. 

The exact version of NM-Factorization is looking for a minimal $k_0$ for which there exist matrices $U \in \R_+^{n\times k_0}$ and $V \in \R_+^{m \times k_0}$ with $A=UV^T$. We will denote such $k_0$ by $\rk_+(A)$, while $\rk(A)$ will denote the rank of $A$. Clearly, $$\rk(A) \leq \rk_+(A) \leq \min\{m,n\}.$$ 

The approximate NM-factorisation by Paatero and  Tapper \cite{https://doi.org/10.1002/env.3170050203}, has seen a tremendous growth after a seminal paper of Lee and Seung \cite{LSNature}. We refer the reader to the following recent publications \cite{8653529, NMFGillis} for background on the problem, and offer a sample of the works that consider the exact version of the problem \cite{MR1230356,leplat2021exact, MR3720357}.  

\subsection{Symmetric NM-Factorization and Completely Positive Factorization}

When dealing with nonnegative symmetric matrices, it makes sense to  look for factorizations that exhibit not only nonnegativity but also symmetry. The most influential factorization that fits this requirement is defined for completely positive matrices.

\emph{The Symmetric NM-Factorization (SN-Factorization)} is a variation of NM-Fac\-to\-ri\-zation  where $U=V$. Hence, we are looking for approximations of a given symmetric nonnegative matrix by a matrix of the form $UU^T$. 
If a matrix can be written as $UU^T$ exactly, for some nonnegative matrix $U$, then it is said to be \emph{completely positive}. We call such factorization \emph{completely positive factorization} and use the abbreviation \emph{CP-Factorization}. While completely positive matrices are necessarily positive semidefinite, not every nonnegative positive semidefinite matrix is completely positive \cite[Example 2.4]{MR1986666}.

For a completely positive matrix $A$, we define $\cp(A)$ to be the minimal $k$ such that there exist $U \in \R_+^{n \times k}$ with $A=UU^T$. If a matrix $A$ is not completely positive, we define $\cp(A)$ to be equal to infinity. 

For the background on the completely positive factorization we refer the reader to the following works \cite{MR3414584,MR1986666,MR3273762}. 

\subsection{Symmetric Nonnegative Matrix Trifactorization}

In this paper we study a factorization that exhibits nonnegativity, symmetry and low rank of a matrix.

 \emph{Symmetric Nonnegative Matrix Trifactorization} is a an approximate factorization of a given symmetric matrix $A$ of the form $BCB^T$, where $B$ and $C$ are nonnegative, and $C$ is symmetric. As above, the Frobenius norm is typically used to measure the approximation. In this work we consider the exact version of SNMT, which we refer to by the acronym \emph{SN-Trifactorization}.
 
 In contrast to extensive literature on 
NM-Factorization and  CP-Facto\-ri\-za\-tion, SN-Tri\-facto\-ri\-za\.tion has so far received considerably less attention. We refer the reader to the works  \cite{pmlr-v28-arora13, 8338424, NMFGillis} on Symmetric Nonnegative Matrix Trifactorization. This factorization is also known as Semi (or weighted) Symmetric Nonnegative Factorization
 \cite{ding-he-simon, thesis-ho, MR2252778}. Applications of SN-Trifactorization established to date include Hidden Markov Model Indentification and Community Detection \cite{8653529}.

\begin{definition}
A factorization of $A \in  \mc S_n^+$ of the form $BCB^T$, where $B \in \R_+^{n \times k}$ and $C \in \mc S_k^+$, is  called \emph{SN-Trifactorization} of $A$. Minimal possible $k$ in such factorization, is 
called the \emph{SNT-rank} of $A$, and is denoted by $\spr(A)$.
\end{definition}

 \subsection{Overview}
The paper defines (exact) SN-Trifactorization and SNT-rank and dedicates Section \ref{sec:basic} to basic properties of this newly defined parameter. Those include comparison with related parameters, and investigation which properties of the classical rank transfer to SNT-rank. Matrices of rank $2$ and of rank $3$ are examined. The class of nonnegative symmetric matrices whose rank equal SNT-rank is studied in Section \ref{sec:perturbation}. 
Section \ref{sec:constructions} is dedicated to a completion problem. The work is concluded with a handful of questions for further research.

\section{Basic Observations}\label{sec:basic}
The results in this section establish basic properties of SNT-rank. In the introduction we met three different ranks of a matrix that are defined through factorizations involving nonnegative factors: $\rk_+(A)$, $\spr(A)$ and $\cp(A)$. First, we take a look at how they compare. 

\begin{proposition}\label{prop:basic1}
Let $A$ be a nonnegative symmetric matrix. Then:
\begin{enumerate}
\item $\rk(A) \leq \spr(A) \leq n$,
\item $\rk_+(A) \leq \spr(A) \leq \cp(A)$.
\end{enumerate}
\end{proposition}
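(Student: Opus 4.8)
The plan is to verify the four inequalities essentially from the definitions, chasing through the shapes of the factorizations and using the obvious fact that a product of nonnegative matrices is nonnegative.

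For the inequality $\rk(A) \le \spr(A)$: if $A = BCB^T$ with $B \in \R_+^{n\times k}$ and $C \in \mc S_k^+$, then $\rk(A) = \rk(BCB^T) \le \rk(B) \le k$, using the standard fact that $\rk(XY) \le \min\{\rk(X),\rk(Y)\}$. Taking the minimum over all such factorizations gives $\rk(A) \le \spr(A)$. For the upper bound $\spr(A) \le n$: take $k = n$, $B = I_n$ (which is nonnegative), and $C = A$ (which lies in $\mc S_n^+$ by hypothesis); then $A = I_n A I_n^T$ is a valid SN-Trifactorization with $k = n$, so $\spr(A) \le n$. This establishes part (1).

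For part (2), the upper bound $\spr(A) \le \cp(A)$: if $A$ is not completely positive then $\cp(A) = \infty$ and there is nothing to prove; otherwise, if $A = UU^T$ with $U \in \R_+^{n\times k}$ and $k = \cp(A)$, then writing $A = U I_k U^T$ exhibits an SN-Trifactorization with $B = U \in \R_+^{n\times k}$ and $C = I_k \in \mc S_k^+$, hence $\spr(A) \le k = \cp(A)$. For the remaining inequality $\rk_+(A) \le \spr(A)$: starting from $A = BCB^T$ with $B \in \R_+^{n\times k}$, $C \in \mc S_k^+$, set $U = B$ and $V^T = CB^T$, i.e. $V = B C^T = BC$ (using $C = C^T$); both $U$ and $V$ are nonnegative since products of nonnegative matrices are nonnegative, and $A = UV^T$ with $U, V \in \R_+^{n\times k}$, so by definition of nonnegative rank $\rk_+(A) \le k$. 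Taking the minimum over all SN-Trifactorizations of $A$ yields $\rk_+(A) \le \spr(A)$, completing part (2).

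I do not anticipate a genuine obstacle here, since every step reduces to substituting a specific pair of factors into the definition of the target rank; the only points requiring any care are the conventions for the ``not completely positive'' case (handled by the $\cp(A) = \infty$ convention stated in the excerpt) and remembering to pass to the minimum over factorizations rather than arguing about a single fixed one. If one wanted to be maximally economical, parts (1) and (2) share the observation that the identity matrix is a legitimate choice for either $B$ or $C$, which is what produces the two extreme comparisons $\spr(A)\le n$ (via $B=I_n$) and $\spr(A)\le\cp(A)$ (via $C=I_k$).
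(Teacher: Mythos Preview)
Your proof is correct and essentially identical to the paper's: both establish the four inequalities by the same substitutions, namely $A=I_nAI_n^T$ for $\spr(A)\le n$, $\rk(A)\le\rk(B)\le k$ for $\rk(A)\le\spr(A)$, $A=UI_kU^T$ for $\spr(A)\le\cp(A)$, and $U=B$, $V^T=CB^T$ for $\rk_+(A)\le\spr(A)$. Your version is slightly more explicit about passing to the minimum and about the $\cp(A)=\infty$ case, but there is no substantive difference.
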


\begin{proof} Both items are quickly deduced from arguments below:
\begin{enumerate}
\item From $A=BCB^T$, $B \in \R^{n \times k}$, we get $\rk(A) \leq \rk(B) \leq k$. On the other hand, every $A \in \mathcal{S}_n^+$ can be written as $A=IAI^T$, so $\spr(A) \leq n$. 

\item Factorization $A=BCB^T$ can be considered as NM-Factorization with $U = B$ and $V^T = CB^T$. For $A$ with $\cp(A) < \infty$, we have $A = UIU^T$ and $\spr(A) \leq \cp(A)$.
\end{enumerate}
\end{proof}

Note that all the inequalities listed in Proposition \ref{prop:basic1} can be strict.

\begin{example}\label{ex:1}
The matrix $$A=\left(
\begin{array}{cccc}
 1 & 1 & 0 & 0 \\
 1 & 0 & 1 & 0 \\
 0 & 1 & 0 & 1 \\
 0 & 0 & 1 & 1 \\
\end{array}
\right),$$
also considered in \cite{MR1230356}, has $\rk(A)=3$, $\rk_+(A)=\spr(A)=4$, and $\cp(A)=\infty$. 

On the other hand, the matrix
$$\ \ B=\left(
\begin{array}{cccc}
 4 & 2 & 2 & 0 \\
 2 & 3 & 1 & 2 \\
 2 & 1 & 3 & 2 \\
 0 & 2 & 2 & 4 \\
\end{array}
\right)=\left(
\begin{array}{ccc}
 2 & 0 & 0 \\
 1 & \sqrt{2} & 0 \\
 1 & 0 & \sqrt{2} \\
 0 & \sqrt{2} & \sqrt{2} \\
\end{array}
\right)\left(
\begin{array}{cccc}
 2 & 1 & 1 & 0 \\
 0 & \sqrt{2} & 0 & \sqrt{2} \\
 0 & 0 & \sqrt{2} & \sqrt{2} \\
 \end{array}
\right)$$
has $\rk(B)=\rk_+(B)=\spr(B)=\cp(B)=3$.
\end{example}

Two simple lemmas below are referred to later in selected proofs. First one lists trivial ambiguities in the SN-Trifactorization. 

\begin{lemma}\label{lem:permutation}
Let $A=BCB^T$ be a SN-Trifactorization for $A$, $P$ a permutation matrix, and $D$ a diagonal matrix with positive diagonal elements. 
\begin{enumerate}
    \item Taking $B_1=BPD$ and $C_1=D^{-1}P^TCPD^{-1}$, we get another SN-Trifacto\-ri\-zation for $A$: $A=B_1C_1B_1^T$. 
      \item The matrix $A_2=DPAP^TD^T$ has a SN-Trifactorization $A_2=B_2CB_2^T$ for $B_2=DPB$. 
\end{enumerate}
Both factorizations exhibit the same the size of the SN-Trifactorization as the original factorization. 
\end{lemma}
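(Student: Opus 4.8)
The plan is to prove both items by direct substitution, in each case checking three things: that the new outer factor is entry-wise nonnegative, that the new inner factor lies in $\mc S_k^+$ (that is, is symmetric as well as nonnegative), and that multiplying the three factors back together reproduces the claimed matrix. The assertion about sizes is then immediate, since none of these manipulations changes the number of columns of the outer factor.

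For item 1, I would first note that $B_1 = BPD$ is a product of an entry-wise nonnegative matrix $B$, a permutation matrix $P$, and a diagonal matrix $D$ with positive diagonal, so $B_1 \in \R_+^{n \times k}$. The matrix $C_1 = D^{-1}P^TCPD^{-1}$ is nonnegative because $D^{-1}$ is again diagonal with positive entries and $C \in \mc S_k^+$; it is symmetric because $C^T = C$ and $D^T = D$, so that $C_1^T = D^{-1}P^TC^TPD^{-1} = C_1$. Finally, expanding
$$B_1C_1B_1^T = (BPD)(D^{-1}P^TCPD^{-1})(BPD)^T = BPD\,D^{-1}P^TCPD^{-1}\,DP^TB^T$$
and cancelling $DD^{-1} = I$ and $PP^T = I$ collapses this to $BCB^T = A$. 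Since $B_1$ still has $k$ columns and $C_1$ is $k \times k$, the size is unchanged.

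For item 2 the computation is even shorter: $B_2 = DPB$ is again a product of nonnegative matrices, the inner factor $C$ is untouched and hence still in $\mc S_k^+$, and $B_2CB_2^T = DP(BCB^T)P^TD^T = DPAP^TD^T = A_2$, again with the same size $k$.

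I do not expect any real obstacle here. The only points that require a little care are the bookkeeping with transposes (using $D^T = D$ and $(BPD)^T = DP^TB^T$) and the observation that it is the \emph{positivity} of the diagonal of $D$ — not merely its nonnegativity — that makes $D$ invertible with a nonnegative inverse, which is exactly what is needed to keep $C_1$ inside $\mc S_k^+$ rather than only in the larger set of symmetric matrices.
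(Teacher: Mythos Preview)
Your proposal is correct; the paper states this lemma without proof, treating it as an immediate verification, and your direct substitution argument is precisely the routine check that justifies it.
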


\begin{lemma}\label{lem:zeroprod}
Let $B' \in \R_+^{n \times s}, B'' \in \R_+^{m \times t}, C \in \R_+^{s \times t}$ satisfy $B'C(B'')^T=0$. If neither $B'$ nor $B''$ has a zero column, then $C=0$.
\end{lemma}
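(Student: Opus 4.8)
The plan is to argue entrywise. Write $B' = \npmatrix{b'_1 & \cdots & b'_s}$ and $B'' = \npmatrix{b''_1 & \cdots & b''_t}$ in terms of their columns, so that the $(p,q)$ entry of $B'C(B'')^T$ expands as $\sum_{i=1}^{s}\sum_{j=1}^{t} C_{ij}\, (b'_i)_p (b''_j)_q$. Since every term in this sum is nonnegative — $C_{ij}\ge 0$ because $C \in \R_+^{s\times t}$, and the factors $(b'_i)_p, (b''_j)_q \ge 0$ because $B', B''$ are nonnegative — the vanishing of the whole sum forces each individual term to vanish: for all $p,q$ and all $i,j$ we have $C_{ij}\,(b'_i)_p(b''_j)_q = 0$.

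Now fix a pair of indices $i \in \{1,\dots,s\}$, $j\in\{1,\dots,t\}$. The hypothesis that $B'$ has no zero column means $b'_i \neq 0$, so there is a row index $p$ with $(b'_i)_p > 0$; likewise $b''_j \neq 0$ gives a row index $q$ with $(b''_j)_q > 0$. Plugging this particular $(p,q)$ into $C_{ij}\,(b'_i)_p(b''_j)_q = 0$ and dividing by the positive quantity $(b'_i)_p(b''_j)_q$ yields $C_{ij} = 0$. Since $i$ and $j$ were arbitrary, $C = 0$.

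There is essentially no obstacle here; the only thing to be careful about is the logical order — one must first use nonnegativity to pass from "the sum is zero" to "each summand is zero," and only then invoke the no-zero-column hypothesis to pick witnesses $p,q$ making the geometric factors strictly positive. I would present this as a short direct computation rather than dressing it up.
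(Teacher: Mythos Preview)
Your proof is correct and is essentially the same argument as the paper's: expand the $(p,q)$ entry of $B'C(B'')^T$ as a double sum of nonnegative terms, conclude each term vanishes, and then use the no-zero-column hypothesis to select indices $p,q$ for which the $B'$ and $B''$ factors are strictly positive, forcing $C_{ij}=0$. The only difference is notational (the paper writes the sum with indices $i,j,k,l$ rather than $p,q,i,j$).
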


\begin{proof}
From
$$(B'C(B'')^T)_{ij}=\sum_{k=1}^s \sum_{l=1}^t (B')_{ik}(C)_{kl}(B'')_{jl} =0,$$
we can conclude that
$(B')_{ik}(C)_{kl}(B'')_{jl}=0$ for any collection of four indices $i\in \{1,\ldots, n\}$, $j\in \{1,\ldots, m\}$, $k \in \{1,\ldots, s\}$, and $l\in \{1,\ldots, t\}$.
From the assumption that neither $B'$ nor $B''$ have a zero column, we deduce that for any $k \in \{1,\ldots, s\}$ there exists $i(k) \in \{1,\ldots, n\}$ with $(B')_{i(k)k}\ne 0$, and for any $l \in \{1,\ldots, t\}$ there exists $j(l) \in \{1,\ldots, m\}$ with $(B'')_{j(k)l}\ne 0$. The claim follows.
\end{proof}

Some properties of the classical rank extend to SNT-rank. 

\begin{proposition}\label{prop:basic}
Let $A, A' \in \mc S_n^+$, $A'' \in \mc S_m^+$ and let $A_0$ be a principal submatrix of $A$. Then:
\begin{enumerate}
\item  $\spr(A+A') \leq \spr(A)+\spr(A')$.
\item  $\spr(A_0) \leq \spr(A)$. 
\item $\spr(A^m) \leq \spr(A)$ for any positive integer $m$.
\item  $\spr(A'\oplus A'') = \spr(A')+\spr(A'')$.
\end{enumerate}
\end{proposition}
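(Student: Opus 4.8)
My plan is to handle the four items separately, since each reduces to a short construction with block matrices, and to put the only genuinely nontrivial work into item (4), the direct-sum equality.

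For item (1): given SN-Trifactorizations $A = B C B^T$ with $B \in \R_+^{n\times k}$, $C\in\mc S_k^+$, and $A' = B' C' (B')^T$ with $B'\in\R_+^{n\times k'}$, $C'\in\mc S_{k'}^+$, I would write $A + A' = \widetilde B \widetilde C \widetilde B^T$ where $\widetilde B = \npmatrix{B & B'} \in \R_+^{n\times(k+k')}$ and $\widetilde C = C \oplus C' \in \mc S_{k+k'}^+$. All factors are nonnegative and $\widetilde C$ is symmetric, so $\spr(A+A') \le k+k'$; taking $k = \spr(A)$, $k' = \spr(A')$ gives the claim. For item (2): if $A_0 = A[\mathcal R,\mathcal R]$ and $A = BCB^T$ is an optimal SN-Trifactorization, then $A_0 = B[\mathcal R,\_]\, C\, (B[\mathcal R,\_])^T$, which is an SN-Trifactorization of $A_0$ of size $\spr(A)$, so $\spr(A_0)\le\spr(A)$. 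For item (3): from $A = BCB^T$ one computes $A^m = B\,C(B^TB)^{m-1}\,B^T$ (by induction on $m$, inserting $B^TB$ each time). Set $C_m = C(B^TB)^{m-1} C$ — wait, more carefully: $A^2 = BCB^TBCB^T = B(CB^TBC)B^T$, and inductively $A^m = B C_m B^T$ with $C_m = C(B^TBC)^{m-1}$; this $C_m = C(B^TB C)^{m-1}$ is symmetric since $A^m$ is symmetric and $B$ has full column rank is not assumed — instead I note $C_m$ is nonnegative because $B^TB$, $C$ are nonnegative, and $C_m$ is symmetric because $C_m = (B^TB)^{-?}\dots$ is awkward, so I would instead just verify directly that $C_m := C(B^T B C)^{m-1}$ satisfies $C_m^T = (CB^TB)^{m-1}C = C(B^TBC)^{m-1} = C_m$ by associativity. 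Hence $A^m = BC_mB^T$ is an SN-Trifactorization of size at most $\spr(A)$.

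For item (4), the inequality $\spr(A'\oplus A'') \le \spr(A') + \spr(A'')$ follows from a block-diagonal construction exactly as in item (1): if $A' = B'C'(B')^T$ and $A'' = B''C''(B'')^T$ are optimal, then $A'\oplus A'' = (B'\oplus B'')(C'\oplus C'')(B'\oplus B'')^T$. The reverse inequality is the crux. Suppose $A'\oplus A'' = BCB^T$ with $B\in\R_+^{(n+m)\times k}$, $C\in\mc S_k^+$, $k = \spr(A'\oplus A'')$; WLOG $C$ has no zero row/column and $B$ has no zero column (deleting such indices only shrinks $k$). Partition $B = \npmatrix{B_1 \\ B_2}$ with $B_1$ the first $n$ rows and $B_2$ the last $m$ rows. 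The off-diagonal block gives $B_1 C B_2^T = 0$. I would now partition the column index set $\{1,\dots,k\}$ into $\mathcal K_1 = \{\text{columns } j : c_j(B_1)\neq 0\}$ and its complement; I claim $\mathcal K_1$ and $\{1,\dots,k\}\setminus\mathcal K_1$ are exactly the columns where $B_2$ is zero, respectively nonzero, and that $C$ is block-diagonal with respect to this partition. Concretely: for a column $j\in\mathcal K_1$ and any column $\ell$ with $c_\ell(B_2)\neq 0$, applying Lemma \ref{lem:zeroprod} to the restriction of $B_1 C B_2^T=0$ to these columns forces $C_{j\ell}=0$; and since $B$ has no zero column, every $\ell\notin\mathcal K_1$ has $c_\ell(B_2)\neq 0$. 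Hence $C = C[\mathcal K_1,\mathcal K_1] \oplus C[\mathcal K_2,\mathcal K_2]$ after permutation, where $\mathcal K_2 = \{1,\dots,k\}\setminus\mathcal K_1$, and moreover $B_1$ is supported only on columns $\mathcal K_1$ while $B_2$ is supported only on $\mathcal K_2$. Therefore $A' = B_1[\_,\mathcal K_1]\,C[\mathcal K_1,\mathcal K_1]\,B_1[\_,\mathcal K_1]^T$ and similarly for $A''$, giving $\spr(A')\le|\mathcal K_1|$, $\spr(A'')\le|\mathcal K_2|$, and $\spr(A') + \spr(A'') \le |\mathcal K_1| + |\mathcal K_2| = k = \spr(A'\oplus A'')$.

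The main obstacle is the bookkeeping in item (4): one must be careful that after deleting zero columns of $B$ the two row-blocks $B_1, B_2$ may still each individually have zero columns, and it is precisely this that makes the partition $\{1,\dots,k\} = \mathcal K_1 \sqcup \mathcal K_2$ well-defined and forces $C$ to be block-diagonal via Lemma \ref{lem:zeroprod}. I would state this reduction (no zero columns of $B$, no zero rows/columns of $C$) explicitly at the start of the argument, since it is what makes the complement of $\mathcal K_1$ coincide with the nonzero-column set of $B_2$. The remaining items are routine block-matrix manipulations and I would present them tersely.
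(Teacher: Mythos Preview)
Your plan is essentially the paper's own proof. Items (1)--(3) match exactly: the paper uses the same block constructions and, for item (3), the same middle factor $(CB^TB)^{m-1}C$, verified symmetric just as you do. For item (4) the paper likewise partitions the columns of $B$ according to which columns of the top row-block are nonzero and invokes Lemma~\ref{lem:zeroprod} on the off-diagonal equation.

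There is one small gap in your item (4). You assert that ``$B_2$ is supported only on $\mathcal K_2$'' (equivalently, that the nonzero-column set of $B_2$ is exactly $\mathcal K_2$), but your argument only gives the inclusion $\mathcal K_2 \subseteq \{\ell : c_\ell(B_2)\neq 0\}$. The reverse inclusion does not follow merely from $B$ having no zero column: a priori some $j\in\mathcal K_1$ could have both $c_j(B_1)\neq 0$ and $c_j(B_2)\neq 0$. You can close this either as the paper does---the off-diagonal block also gives $B_1[\,\cdot\,,\mathcal K_1]\,C[\mathcal K_1,\mathcal K_1]\,B_2[\,\cdot\,,\mathcal K_1]^T = 0$, and since $C[\mathcal K_1,\mathcal K_1]$ has no zero column (else $C$ would, by the block-diagonality you already established), Lemma~\ref{lem:zeroprod} applied to $(B_1[\,\cdot\,,\mathcal K_1]C[\mathcal K_1,\mathcal K_1])\cdot B_2[\,\cdot\,,\mathcal K_1]^T=0$ forces $B_2[\,\cdot\,,\mathcal K_1]=0$---or directly from your own setup: your argument shows $C_{j\ell}=0$ whenever $j\in\mathcal K_1$ and $c_\ell(B_2)\neq 0$, so if some $j\in\mathcal K_1$ also had $c_j(B_2)\neq 0$, then $C_{j\ell}=0$ for all $\ell\in\mathcal K_1$ (take $\ell=j$ with the roles swapped, using symmetry of $C$) and for all $\ell\in\mathcal K_2$ (since $c_\ell(B_2)\neq 0$), making row $j$ of $C$ zero, contrary to your standing hypothesis on $C$. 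With this step added, your proof is complete and coincides with the paper's.
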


\begin{proof}
Inequalities can be deduced from corresponding SN-Trifactorizations as follows:
\begin{enumerate}
\item From $A=BCB^T$ and $A'=B'C'B'^T$ we get
$$ A+A' = \npmatrix{B & B'}\npmatrix{C & 0 \\ 0 & C'}\npmatrix{B^T \\ B'^T} .$$  

\item  Let $A\in \R_+^{n \times n}$ have a principal submatrix $A_0\in \R_+^{m \times m}$, $m <n$. Using Lemma \ref{lem:permutation}, we can without loss of generality assume $A = \npmatrix{A_0 & * \\ * & *}$.
Let $A=BCB^T$ with $B=\npmatrix{B_1 \\ *}\in \R_+^{n \times k}$ and $B_1\in \R_+^{m \times k}$.
From $A_0=B_1CB_1^T$, we get $\spr(A') \leq \spr(A)$.
\item  Suppose that $A=BCB^T$. Then $A^m=B(CB^TB)^{m-1}CB^T$ is a SN-Trifacto\-ri\-zation of $A^m$, since the matrix $(CB^TB)^{m-1}C$ is symmetric. \item Similarly, $A'=B'C'B'^T$ and $A''=B''C''B''^T$ gives us:
$$ \npmatrix{A' & 0 \\ 0 & A''} = \npmatrix{B' & 0 \\ 0 & B''}\npmatrix{C' & 0 \\ 0 & C''}\npmatrix{B'^T  & 0 \\ 0 & B''^T} ,$$
proving $\spr(A'\oplus A'') \le \spr(A')+\spr(A'')$. 
\end{enumerate}
As we have equality in the last item, we still need to prove converse inequality. Let $\spr(A'\oplus A'')=s$ with  a corresponding SN-Trifactorization $A'\oplus A''=BCB^T$.  Using Lemma \ref{lem:permutation}, we can assume that $B$ and $C$ are of the following form:
$$B=\npmatrix{B_{11} & 0 \\ B_{21} & B_{22}} \text{ and } C=\npmatrix{C_{11} & C_{12} \\ C_{12}^T & C_{22}},$$
where $B_{11} \in \R_+^{n \times s_1}$, $B_{22}\in \R_+^{m \times (s-s_1)}$, $C_{12}\in \R_+^{s_1 \times (s-s_1)},$ and $B_{11}$ has no zero columns. Since the factorization corresponds to $\spr(A' \oplus A'')$, $B_{22}$ also has no zero columns. From
$B_{11}C_{11}B_{21}^T+B_{11}C_{12}B_{22}^T=0$, we get $B_{11}C_{12}B_{22}^T=0$, and thus $C_{12}=0$ by Lemma \ref{lem:zeroprod}. Now, $C_{11}$ has no zero rows or columns, so $B_{11}C_{11}$ has no zero columns. Since $B_{11}C_{11}B_{21}^T=0$, we have $B_{21}=0$ again by Lemma \ref{lem:zeroprod}. We now have $A'=B_{11}C_{11}B_{11}^T$ and $A''=B_{22}C_{22}B_{22}^T$, proving $\spr(A')\le s_1$ and $\spr(A'')\le s-s_1$.
\end{proof}

\begin{proposition}\label{prop:fromNMF}
Let $A \in \mc S_n^+$. Then:
\begin{enumerate}
    \item $\spr(A) \leq 2 \rk_+(M)$ for all $M \in \R_+^{n \times n}$ satisfying $A=M+M^T$, 
    \item $\spr(A) \leq 2 \rk_+(A)$, 
    \item If 
    \begin{equation}\label{eq:AX}
         A=\npmatrix{0 & X \\ X^T & 0} \in \mc S_{n_1+m_1}^+,
     \end{equation}
    for  $X\in \R_+^{n_1 \times m_1}$, $n_1+m_1=n$, then  $\spr(A) =2 \rk_+(X)$.
\end{enumerate}
\end{proposition}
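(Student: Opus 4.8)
The plan is to handle the three items in order, building each on the previous. For item (1), start from a nonnegative rank factorization $M = UV^T$ with $U, V \in \R_+^{n \times r}$, $r = \rk_+(M)$. Then $A = M + M^T = UV^T + VU^T$, which can be written as $\npmatrix{U & V}\npmatrix{0 & I_r \\ I_r & 0}\npmatrix{U & V}^T$. Setting $B = \npmatrix{U & V} \in \R_+^{n \times 2r}$ and $C = \npmatrix{0 & I_r \\ I_r & 0} \in \mc S_{2r}^+$ gives an SN-Trifactorization of size $2r$, so $\spr(A) \le 2\rk_+(M)$. Item (2) is then immediate: take $M = A$, which is itself nonnegative with $A = \tfrac12 A + \tfrac12 A^T$ (or simply apply item (1) to $M = \tfrac12 A$, noting $\rk_+(\tfrac12 A) = \rk_+(A)$), giving $\spr(A) \le 2\rk_+(A)$.

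The substantive part is the lower bound in item (3). The inequality $\spr(A) \le 2\rk_+(X)$ follows from item (1): writing $M = \npmatrix{0 & X \\ 0 & 0}$, we have $M + M^T = A$ and $\rk_+(M) = \rk_+(X)$, so $\spr(A) \le 2\rk_+(X)$. For the reverse inequality $\spr(A) \ge 2\rk_+(X)$, suppose $A = BCB^T$ is an SN-Trifactorization with $k = \spr(A)$ columns. The key observation is that the zero diagonal blocks force a block structure on $B$: writing $B = \npmatrix{B_1 \\ B_2}$ with $B_1 \in \R_+^{n_1 \times k}$, $B_2 \in \R_+^{m_1 \times k}$, the $(1,1)$ block equation reads $B_1 C B_1^T = 0$ and the $(2,2)$ block reads $B_2 C B_2^T = 0$. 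I would then argue, using nonnegativity and an argument in the spirit of Lemma \ref{lem:zeroprod}, that the columns of $B$ can be partitioned (after permuting, via Lemma \ref{lem:permutation}) into those whose support lies in the first $n_1$ rows and those whose support lies in the last $m_1$ rows, so that $B = \npmatrix{B_1 & 0 \\ 0 & B_2}$ with $B_1 \in \R_+^{n_1 \times k_1}$, $B_2 \in \R_+^{m_1 \times k_2}$, $k_1 + k_2 = k$, and $C = \npmatrix{C_{11} & C_{12} \\ C_{12}^T & C_{22}}$; the vanishing diagonal blocks then give $B_1 C_{11} B_1^T = 0$ and $B_2 C_{22} B_2^T = 0$, hence $C_{11} = 0$ and $C_{22} = 0$ by Lemma \ref{lem:zeroprod} (after discarding zero columns). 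This leaves $X = B_1 C_{12} B_2^T$, a nonnegative factorization of $X$ of inner dimension $\min\{k_1, k_2\} \le k/2$ — wait, more carefully, $X = B_1 C_{12} B_2^T = (B_1 C_{12})(B_2)^T$ exhibits $X$ as a product of two nonnegative matrices with $k_2$ columns, and symmetrically with $k_1$ columns, so $\rk_+(X) \le \min\{k_1, k_2\} \le \lfloor k/2 \rfloor$, giving $k \ge 2\rk_+(X)$.

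The main obstacle is justifying the claim that the columns of $B$ split cleanly according to which row-block their support meets — a priori a column of $B$ could have nonzero entries in both blocks. The resolution is that if a column $c_j$ of $B$ has nonzero entries in both blocks, say $(B_1)_{i j} > 0$ and $(B_2)_{\ell j} > 0$, then for $B_1 C B_1^T = 0$ to hold one needs $C_{jj} = 0$ and more; one must chase the equations $B_1 C B_1^T = 0$, $B_2 C B_2^T = 0$ carefully to conclude that any "mixed" column interacts trivially with $C$ and can be deleted without changing $A$, contradicting minimality of $k$. I would phrase this deletion argument explicitly, using that in a minimal SN-Trifactorization $B$ has no zero columns and $C$ has no zero rows, so that Lemma \ref{lem:zeroprod} applies at each step. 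Once the block-diagonal form of $B$ is in hand, the rest is the short computation above.
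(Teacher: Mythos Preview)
Your treatment of items (1), (2), and the upper bound in (3) matches the paper exactly.

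For the lower bound in (3), you have made the problem harder than necessary, and the ``main obstacle'' you identify is self-imposed. You aim to show that $B$ can be taken block \emph{diagonal}, $B = \npmatrix{B_1 & 0 \\ 0 & B_2}$, and you correctly note that this is not obvious: a column of $B$ might have support in both row-blocks, and ruling this out requires a delicate deletion argument that you do not fully carry out.

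The paper avoids this entirely by asking only for block \emph{lower-triangular} form. After a column permutation (Lemma~\ref{lem:permutation}), one can always write
\[
B=\npmatrix{B_{11} & 0 \\ B_{21} & B_{22}},
\]
where $B_{11}\in\R_+^{n_1\times s_1}$ collects exactly those columns whose top block is nonzero; by construction $B_{11}$ has no zero columns. No structural claim about $B_{21}$ is needed. The $(1,1)$ block of $A=BCB^T$ then reads $B_{11}C_{11}B_{11}^T=0$, so $C_{11}=0$ by Lemma~\ref{lem:zeroprod}. With $C_{11}=0$, the $(1,2)$ block simplifies to $X=B_{11}C_{12}B_{22}^T$, giving $\rk_+(X)\le\min\{s_1,\,s-s_1\}\le\lfloor s/2\rfloor$ immediately. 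There is no need to show $C_{22}=0$ or $B_{21}=0$.

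So the gap in your plan is not fatal, but it is unnecessary: replace ``block diagonal'' with ``block lower triangular'' and the obstacle disappears.
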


\begin{proof}
Let  $\rk_+(M)=k$, $U \in \R_+^{n \times k}$, $V \in \R_+^{n \times k}$, and  $M=UV^T$. We have 
$$A=\npmatrix{U & V} \npmatrix{0 & I_k \\ I_k & 0}\npmatrix{U^T \\ V^T},$$
proving $\spr(A) \leq 2k$. This proves the first two items in the proposition. The inequality $\spr(A) \leq 2 \rk_+(X)$ in item 3. is now also established by noting that 
$$\rk_+(X)=\rk_+\left(\npmatrix{0 & X \\ 0 & 0}\right).$$  

Let $A$ be as in \eqref{eq:AX} and $\spr(A)=s$ with corresponding SN-Trifactorization $A=BCB^T$.  Using Lemma \ref{lem:permutation}, we can assume:
$$B=\npmatrix{B_{11} & 0 \\ B_{21} & B_{22}} \text{ and } C=\npmatrix{C_{11} & C_{12} \\ C_{12}^T & C_{22}},$$
where $B_{11} \in \R_+^{n_1 \times s_1}$, $B_{22}\in \R_+^{m_1 \times (s-s_1)}$, $C_{12}\in \R_+^{s_1 \times (s-s_1)},$ and $B_{11}$ has no zero columns. From $B_{11}C_{11}B_{11}^T=0$ we get $C_{11}=0$ by Lemma \ref{lem:zeroprod}. Now $X=B_{11}C_{12}B_{22}^T$, hence $\rk_+(X) \leq \min\{s_1,s-s_1\} \leq \lfloor \frac{s}{2}\rfloor$, proving $2\rk_+(X) \leq s$, as required.
\end{proof}

As we illustrate in the example bellow, it can happen that $\spr(A)<2 \rk_+(M)$ for all nonnegative $M$ satisfying $A=M+M^T$. 

\begin{example}
Let $e = \npmatrix{1 & 1 & ... & 1}^T \in \R^n$ be the vector of all $1$'s,
$$M:=\left(
\begin{array}{ccc}
 0 & 1 & 0_{1 \times n} \\
 1 & 0 & 0_{1 \times n} \\
 e & e & 0_{n \times n}
\end{array}
\right) \in \R_+^{(n+2) \times (n+2)} \text{ and } A:=M+M^T=\left(
\begin{array}{ccc}
 0 & 2 & e^T \\
 2 & 0 & e^T \\
 e & e & 0
\end{array}
\right).$$
From $\rk(M)=\rk_+(M)=2$ and $\rk(A)=3$, we get $3 \le \spr(A) \le 4$ by Proposition \ref{prop:fromNMF}. Actually, $\spr(A) =3$, since
$A=BCB^T$ with 
$$B=\left(
\begin{array}{lll}
 1 & 0 & 0 \\
 0 & 1 & 0 \\
 0_{n \times 1} & 0_{n \times 1} & e
\end{array}
\right) \in \R_+^{(n+2) \times 3} \text{ and } C=\left(
\begin{array}{ccc}
 0 & 2 & 1 \\
 2 & 0 & 1 \\
 1 & 1 & 0
\end{array}
\right).$$
\end{example}

Separable NMF is a variation of NMF, where the columns of the first factor in $A=UV^T$ are chosen from the columns of the matrix $A$, \cite{NIPS2003_1843e35d}. It turns out that with separability condition added, SNT-rank and NMF-rank agree.  

\begin{lemma}\label{lem:sep}
Let $A \in\mc S_n^+$ be a nonnegative symmetric matrix and $P \in \R^{n \times n}$ a permutation matrix. If 
\begin{equation}\label{eq:partition}
P^TAP=\npmatrix{A_{11} \\ A_{21}}\npmatrix{I_k & Q}
\end{equation}
for some nonnegative matrix $Q \in \R_+^{k \times (n-k)}$, then $\spr(A)\leq k$. 
\end{lemma}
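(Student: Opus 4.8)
The plan is to show that the separability hypothesis already encodes an SN-Trifactorization of size $k$, essentially for free, once symmetry of $A$ is used.

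First I would set $B' = \npmatrix{A_{11} \\ A_{21}} \in \R_+^{n \times k}$, where $A_{11}$ is the leading $k \times k$ block and $A_{21} \in \R_+^{(n-k) \times k}$, so that \eqref{eq:partition} reads $P^T A P = B' \npmatrix{I_k & Q}$. Expanding the right-hand side into blocks gives
$$P^TAP = \npmatrix{A_{11} & A_{11}Q \\ A_{21} & A_{21}Q}.$$
Since $P$ is a permutation matrix and $A$ is symmetric, $P^T A P$ is symmetric. Comparing the $(1,1)$ block yields $A_{11} = A_{11}^T$, and comparing the $(2,1)$ block with the transpose of the $(1,2)$ block yields $A_{21} = (A_{11}Q)^T = Q^T A_{11}$, using $A_{11} = A_{11}^T$. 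Consequently $B' = \npmatrix{I_k \\ Q^T} A_{11}$, and therefore
$$P^T A P = \npmatrix{I_k \\ Q^T} A_{11} \npmatrix{I_k & Q}.$$

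Finally I would put $B = P \npmatrix{I_k \\ Q^T} \in \R_+^{n \times k}$ and $C = A_{11}$. The matrix $C$ is symmetric by the step above and nonnegative because its entries are entries of $A$, so $C \in \mc S_k^+$. Since $\npmatrix{I_k & Q} P^T = B^T$, multiplying the displayed identity on the left by $P$ and on the right by $P^T$ gives $A = B C B^T$, an SN-Trifactorization of size $k$, hence $\spr(A) \le k$.

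There is no substantial obstacle here; the only point requiring a little care is the bookkeeping of block sizes — in particular, identifying the leading $k \times k$ block of $P^T A P$ with the matrix $C$ and checking that symmetry of $P^T A P$ forces $A_{21} = Q^T A_{11}$, which is precisely what makes the factorization close up. One might also remark in passing that the reverse inequality $\spr(A)\ge \rk_+(A)$ from Proposition \ref{prop:basic1} together with the elementary fact $\rk_+(A)\le k$ (coming directly from \eqref{eq:partition}) is what justifies the claim in the surrounding text that SNT-rank and NMF-rank coincide under separability.
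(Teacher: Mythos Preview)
Your proof is correct and follows essentially the same route as the paper: use symmetry of $P^TAP$ to deduce $A_{21}=Q^TA_{11}$, rewrite the factorization as $\npmatrix{I_k \\ Q^T}A_{11}\npmatrix{I_k & Q}$, and read off the SN-Trifactorization with $B=P\npmatrix{I_k \\ Q^T}$ and $C=A_{11}$. The paper compresses this into a single line, while you spell out the block comparison and the verification that $B,C$ are nonnegative with $C$ symmetric; the content is identical.
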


\begin{proof}
Since $A=A^T$ we have
$P^TAP=\npmatrix{I_k \\ Q^T} A_{11} \npmatrix{I_k & Q}$. 
\end{proof}

If the condition that $Q$ is nonnegative is removed, then a factorization of the form \eqref{eq:partition} exists for every symmetric matrix $A$, with $k=\rk(A)$. In particular, any rank $1$ matrix $A \in \mc S_n^+$ can be written as $A = vv^T$ for some $v \in \R^n_+$, thus $\spr(A) = 1$. The following corollary proves that a similar conclusion is true also for matrices of rank $2$. 
A similar results holds for NMF-rank, see \cite[Theorem 4.1]{MR1230356}.  
 
\begin{corollary}\label{thm:rk2}
Let $A \in \mc S_n^+$ be a nonnegative symmetric matrix of rank $2$. Then $\spr(A)=2$. 
\end{corollary}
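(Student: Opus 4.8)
I would start from the observation that $\rk(A)=2$ already forces $\spr(A)\ge 2$ by Proposition~\ref{prop:basic1}(1), so the entire task is to prove the reverse inequality $\spr(A)\le 2$. By Lemma~\ref{lem:sep} it suffices to produce a permutation matrix $P$ and a nonnegative matrix $Q\in\R_+^{2\times(n-2)}$ with
$$P^TAP=\npmatrix{A_{11}\\ A_{21}}\npmatrix{I_2 & Q};$$
equivalently, to find two columns of $A$ such that every column of $A$ is a nonnegative linear combination of those two.

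To locate these two columns I would argue geometrically. Consider the cone $K=\mathrm{cone}\{c_1(A),\ldots,c_n(A)\}$ generated by the columns of $A$. Since $A$ is nonnegative, $K\subseteq\R_+^n$, so $K$ is pointed: $v\in K$ and $-v\in K$ force $v\ge 0$ and $-v\ge 0$, hence $v=0$. Because $\rk(A)=2$, the linear span of $K$ is the two-dimensional column space of $A$, so $K$ is a two-dimensional pointed polyhedral cone; such a cone has exactly two extreme rays, and the extreme rays of a finitely generated cone are spanned by some of its generators. Thus there are indices $p$ and $q$ with $c_p(A)$ and $c_q(A)$ spanning the two extreme rays of $K$; these columns are linearly independent, and every column of $A$ lies in $K=\mathrm{cone}\{c_p(A),c_q(A)\}$, i.e.\ is a nonnegative combination of $c_p(A)$ and $c_q(A)$. (Concretely, in a fixed linear chart of the column space one assigns an angular coordinate to each nonzero column; pointedness says all these angles lie in an arc shorter than $\pi$, and $p,q$ are the columns attaining its two endpoints.)

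It then remains to assemble the factorization. Let $P$ be a permutation matrix moving columns $p$ and $q$ into the first two positions. The first two columns of $P^TAP$ are $P^Tc_p(A)$ and $P^Tc_q(A)$, which form a rank-$2$ block $\npmatrix{A_{11}\\ A_{21}}$; each remaining column of $P^TAP$ is the image under $P^T$ of some column of $A$, hence the same nonnegative combination of $P^Tc_p(A)$ and $P^Tc_q(A)$ that the original column was of $c_p(A)$ and $c_q(A)$. Collecting these coefficients into $Q\in\R_+^{2\times(n-2)}$ gives the displayed identity, and Lemma~\ref{lem:sep} then yields $\spr(A)\le 2$, so $\spr(A)=2$. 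I expect the one genuinely substantive point to be the cone-theoretic step---that a two-dimensional pointed cone generated by finitely many vectors is already generated by two of them; everything else is bookkeeping, and degeneracies such as zero or repeated columns cause no difficulty, since such a column is trivially a nonnegative combination of $c_p(A)$ and $c_q(A)$ and simply contributes a column to $Q$.
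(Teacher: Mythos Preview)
Your proof is correct and follows the same strategy as the paper: establish that a nonnegative rank-$2$ matrix is separable and then invoke Lemma~\ref{lem:sep}. The only difference is that the paper outsources the separability of rank-$2$ matrices to \cite{NMFGillis}, whereas you supply a self-contained cone-theoretic argument for that step.
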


\begin{proof}
Every rank $2$ matrix is separable by the proof of Theorem 2.6 in \cite{NMFGillis}. Using Lemma \ref{lem:sep} we get $\spr(A)=2$.
\end{proof}

Corollary \ref{thm:rk2} cannot be generalised to matrices with $\rk(A)=3$, or even to matrices with $\rk_+(A)=3$. This is shown in our next example, that also illustrates that $\spr(A)>\rk_+(A)$ can happen, showing that $\spr(A)$ is indeed a new parameter. 

\begin{example}\label{ex:2}
Let 
\begin{equation}\label{eq:ABC}
A=B_1C_1=\left(
\begin{array}{ccc}
 0 & 1 & 1  \\
 0 & 0 & 1  \\
 1 & 0 & 0 \\
 1 & 1 & 0 \\
\end{array}
\right)\left(\begin{array}{cccc}
 2 & 1 & 0 & 1  \\
 0 & 1 & 1 & 0  \\
 1 & 0 & 1 & 2  \\
\end{array}
\right)=\left(\begin{array}{cccc}
 1 & 1 & 2 & 2  \\
 1 & 0 & 1 & 2  \\
 2 & 1 & 0 & 1  \\
 2 & 2 & 1 & 1 \\
\end{array}
\right).
\end{equation}
Clearly,  $\rk(A)=\rk_+(A)=3$ and $\spr(A) \in \{3,4\}$. Assuming $\spr(A) = 3$, we have
$A=BCB^T$ with $B\in \R_+^{4 \times 3}$, $C \in \R_+^{3 \times 3}$, and $\rk(C)=3$. 

With the aim of arriving at contradiction, we first we consider the pattern restrictions on $B$ and $C$ coming from the two zero entries in $A$. Let us denote the rows of $B$ by $b_i^T$, $i=1,2,3,4$.  Hence, $b_2^TCb_2=b_3^TCb_3=0$. If either $b_2$ or $b_3$ have two positive entries, then $C$ needs to have a $2 \times 2$ principal submatrix  equal to zero, contradicting $\rk(C)=3$. Now that we know that $b_2$ and $b_3$ each have only one positive entry, we further note that those entries have to appear in different positions, for otherwise we would have $a_{23}=0$.

Replacing $B$ with $BP$ and $C$ with $P^TCP$, where $P$ is a permutation matrix,  we may assume that $b_2^T=\alpha \npmatrix{1 & 0 & 0}$, $b_3^T=\beta \npmatrix{0 & 1 & 0}$, and $c_{11}=c_{22}=0$. Let $D$ be a diagonal matrix $D=\diag\npmatrix{\alpha & \beta & 1}$. Replacing $B$ by $BD^{-1}$ and $C$ by $DCD$, we may further assume that $\alpha=\beta=1$. From $a_{23}=1$, we now get $c_{12}=1$.

Since $B$ shares a column space with $B_1$ defined in \eqref{eq:ABC}, we have $B=B_1 X$, for $X \in \R^{3 \times 3}$. 
From the information that we already have on $B$, we deduce:
 $$X=\left(
\begin{array}{ccc}
 0 & 1 & 0 \\
 x_{21} & x_{22} & x_{23} \\
 1 & 0 & 0 \\
\end{array}
\right),$$
which in turn gives us:
$$B=\left(
\begin{array}{ccc}
 x_{21}+1 & x_{22} & x_{23} \\
 1 & 0 & 0 \\
 0 & 1 & 0 \\
 x_{21} & x_{22}+1 & x_{23} \\
\end{array}
\right).
$$
Again replacing $B$ with $BD^{-1}$ and $C$ with $DCD$, this time for matrix $D=\diag\npmatrix{1 & 1 & x_{23}}$, we get:
$$B=\left(
\begin{array}{ccc}
 x_{21}+1 & x_{22} & 1 \\
 1 & 0 & 0 \\
 0 & 1 & 0 \\
 x_{21} & x_{22}+1 & 1 \\
\end{array}
\right) \text{ and } 
C=\left(
\begin{array}{ccc}
 0 & 1 & c_{13} \\
 1 & 0 & c_{23} \\
 c_{13} & c_{23} & c_{33} \\
\end{array}
\right).$$
From $a_{21}=1$ and  $a_{31}=2$ we get $c_{13}=1-x_{22}$ and $c_{23}=1-x_{21}$, in particular showing $x_{21},x_{22} \in (0,1)$. Finally, $a_{11}=1$, gives us $c_{33}=-1 - 2 x_{21} - 2 x_{22} + 2 x_{21} x_{22} = 2(1-x_{21})(1-x_{22})-3$, which is negative for all $x_{21},x_{22} \in (0,1)$, a contradiction. Hence, $\spr(A) = 4 > \rk_+(A)=3$.
\end{example}

Note that in the example above we were not able to exclude $\spr(A)=3$ based on the pattern of $A$ alone. This example also allows us to show that the property of rank: $\rk(A^n)=\rk(A)$ when $A=A^T$, does not extend to $\spr$. 

\begin{example}
For the matrix $A$ from Example \ref{ex:2} we have
$$ A^2=\left(
\begin{array}{cccc}
 10 & 7 & 5 & 8  \\
 7 & 6 & 4 & 5  \\
 5 & 4 & 6 & 7  \\
 8 & 5 & 7 & 10 \\
\end{array}
\right) = BCB^T, $$
where
$$B=\left(
\begin{array}{ccc}
 0 & 1 & 1  \\
 0 & 0 & 1  \\
 1 & 0 & 0 \\
 1 & 1 & 0 \\
\end{array}
\right) \text{ and } 
C=\left(
\begin{array}{ccc}
 6 & 1 & 4 \\
 1 & 2 & 1 \\
 4 & 1 & 6 \\
\end{array}
\right),$$
so $\spr(A^2)=\rk(A^2)=3<\spr(A)=4$. 
\end{example}

For matrices with $\rk(A)=3$, $\spr(A)$ cannot be bounded by a constant independent of the size of the matrix $A$. This fact can be deduced from the equivalent statement for $\rk_+(A)$. This was first observed in \cite{MR2563025}, where it was shown that for the Euclidean distance matrix $M_n\in \mc S_n^+$, defined by $(M_n)_{ij} = (i-j)^2$, we have $\rk(M_n)=3$ but $\rk_+(M_n)$ cannot be bounded independently of $n$. The paper \cite{MR2964717} gives some lower bounds for NMF-rank of $M_n$ in Corollary 6, and the  upper bound $\rk_+(M_n) \le \lceil \frac{n}{2} \rceil +2$ in Theorem 9. This upper bound is proved by constructing a corresponding NM-Factorization, that we modify to SN-Trifactorization below. The NMF-rank for Euclidean distance matrices has been also considered in \cite{MR2905148,MR2653832, MR3918551}.

\begin{proposition}\label{edm}
Let $M_n\in \mc S_n^+$ be the matrix with $(M_n)_{ij} = (i-j)^2$.
Then $\spr(M_n) \le \lceil \frac{n}{2} \rceil+2$.
\end{proposition}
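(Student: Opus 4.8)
The plan is to take the explicit nonnegative matrix factorization of $M_n$ behind the bound $\rk_+(M_n)\le\lceil n/2\rceil+2$ from \cite{MR2964717} and upgrade it to a SN-Trifactorization of the same size. Note that the cheap estimate $\spr(M_n)\le 2\rk_+(M_n)$ from Proposition \ref{prop:fromNMF} loses a factor of two and is too weak here, so the symmetry of $M_n$ must be used. Concretely, I would start from the rank-three identity $M_n=de^T+ed^T-2pp^T$, where $e=\npmatrix{1 & \cdots & 1}^T$, $p=\npmatrix{1 & 2 & \cdots & n}^T$ and $d=\npmatrix{1 & 4 & \cdots & n^2}^T$, and aim for a factorization that retains the two nonnegative rank-one terms $de^T$, $ed^T$ and replaces only the negative term $-2pp^T$ by nonnegative rank-one ``corrections'' localized on pairs of consecutive indices, so that altogether one obtains $M_n=UV^T$ with $U,V\in\R_+^{n\times r}$ and $r=\lceil n/2\rceil+2$. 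I would record $U$ and $V$ column by column.

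The key step is to symmetrize the factorization without doubling the number of columns. Because $M_n=M_n^T$ we also have $M_n=VU^T$, and the cheap move $M_n=\tfrac12\left(UV^T+VU^T\right)$ only gives $\spr(M_n)\le 2r$. Instead I would use that, in this particular construction, the columns of $V$ are, up to order and positive scaling, the columns of $U$ — a manifestation of the reflection symmetry $RM_nR=M_n$ of $M_n$, with $R$ the $n\times n$ reversal permutation matrix, under which the ``up-ramps'' become ``down-ramps'' and the terms $de^T$, $ed^T$ are interchanged. Writing this as $V=U\Pi D$ with $\Pi$ an $r\times r$ permutation matrix and $D$ a positive diagonal matrix, and setting $B:=U$ and $C:=\tfrac12\left(\Pi D+D\Pi^T\right)$, we get
$$M_n=\tfrac12\left(UV^T+VU^T\right)=\tfrac12\,U\left(\Pi D+D\Pi^T\right)U^T=BCB^T,$$
where $C$ is symmetric and nonnegative (each of $\Pi D$ and $D\Pi^T$ is a product of nonnegative matrices) and $B\in\R_+^{n\times r}$; hence $\spr(M_n)\le r=\lceil n/2\rceil+2$. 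The same argument works whenever $V=UG$ for some $G$ with $G+G^T\ge 0$, taking $C=\tfrac12\left(G+G^T\right)$.

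The main obstacle is precisely the structural claim $V=U\Pi D$: it has to be checked against the explicit factorization in \cite{MR2964717}, and if that factorization is not already in reflection-symmetric form, one must redo it symmetrically — equivalently, in its nested-polytope interpretation, verify that a polytope sandwiched between the inner and outer polytopes of $M_n$ can be chosen invariant under $i\mapsto n+1-i$ without using more than $\lceil n/2\rceil+2$ vertices, so that the resulting NM-Factorization is automatically of the form $BCB^T$. A self-contained alternative is to exhibit $B$ and $C$ directly: let $B$ consist of $e$, $d$ together with the localized ramp vectors that replace $p$, and choose the nonnegative symmetric $C$ so that the part of $C$ indexed by the $e$ and $d$ columns reproduces $de^T+ed^T$ while the part supported on the ramps reproduces $-2pp^T$ with all negative signs absorbed. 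Arranging those ramps so that this last matrix comes out nonnegative while keeping the total number of columns at $\lceil n/2\rceil+2$ is the heart of the matter; once the factors are fixed, verifying $(BCB^T)_{ij}=(i-j)^2$ for all $i,j$ is routine.
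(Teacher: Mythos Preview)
Your plan hinges on the structural claim $V=U\Pi D$ for the NM-Factorization $M_n=UV^T$ from \cite{MR2964717}, and you correctly flag this as the main obstacle---but you never verify it, so as written this is not a proof. The reflection symmetry $RM_nR=M_n$ does not by itself force $V$ to be a monomial transform of $U$: from $M_n=UV^T$ one only obtains that $(RU)(RV)^T$ is \emph{another} NM-Factorization of the same size, and since such factorizations are highly non-unique there is no a~priori reason for the columns of $V$ to match those of $U$ up to permutation and positive scaling. Your fallback (``redo it symmetrically'', or exhibit $B$ and $C$ directly) is precisely where the real work lies, and you do not carry it out; the closing sentence ``once the factors are fixed, verifying $(BCB^T)_{ij}=(i-j)^2$ is routine'' presupposes factors you have not produced.

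The paper bypasses all of this by writing down explicit factors. For even $n$, with $v=(1,3,\dots,n-1)^T\in\R^{n/2}$ and $K_m$ the $m\times m$ reversal matrix, it sets
\[
B=\npmatrix{ K_{n/2}v & 0 & I_{n/2}\\ 0 & v & K_{n/2}},\qquad
C=\npmatrix{ 0 & 1 & 0\\ 1 & 0 & 0\\ 0 & 0 & M_{n/2}},
\]
and checks $M_n=BCB^T$; for odd $n$ one uses that $M_n$ is a principal submatrix of $M_{n+1}$ together with Proposition~\ref{prop:basic}. Note that here $C$ is \emph{not} of the form $\tfrac12(\Pi D+D\Pi^T)$---its bottom-right block is the full matrix $M_{n/2}$---so the construction is recursive in nature and genuinely different from the monomial-symmetrization you propose. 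If your route is to succeed, you would need to actually exhibit a size-$(\lceil n/2\rceil+2)$ NM-Factorization with $V=U\Pi D$, at which point you have essentially constructed the SN-Trifactorization directly.
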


\begin{proof}
First suppose that $n$ is even. Let 
$$v = \left(\begin{array}{cccc}  1 & 3 & \ldots & n-1 \end{array} \right)^T \in \R^{\frac{n}{2}}$$ and
$K_n \in \R^{n \times n}$ be the matrix with ones on the anti-diagonal and zeros elsewhere.
For
$$B=\left(
\begin{array}{ccc}
 K_{\frac{n}{2}}v & 0 & I_{\frac{n}{2}} \\
 0 & v & K_{\frac{n}{2}} \\
\end{array}
\right), \ \ C=\left(
\begin{array}{ccc}
 0 & 1 & 0 \\
 1 & 0 & 0 \\
 0 & 0 & M_{\frac{n}{2}} \\
\end{array}
\right)$$ we get $M_n=BCB^T$. 
If $n$ is odd, $M_n$ is a principal submatrix of $M_{n+1}$, so $\spr(M_n) \le \spr(M_{n+1}) \le \lceil \frac{n+1}{2} \rceil+2 = 
\lceil \frac{n}{2} \rceil+2$.
\end{proof}

\section{Matrices whose SNT-rank equals rank} \label{sec:perturbation}

As we have seen, $\spr(A)$ can be significantly bigger than $\rk(A)$. In this section we take a closer look at the family of nonnegative symmetric matrices that satisfy $\rk(A)=\spr(A)$:
 $$\mc E_n:=\{A\in \mc S_n^+; \rk(A)=\spr(A)\}.$$

An invertible matrix $T$ is called a Perron similarity if one of its columns and the corresponding row of its inverse
are both nonnegative or both nonpositive. Perron similarities play a role in the theory of nonnegative matrices, as any matrix that brings an irreducible nonnegative matrix to its Jordan canonical form under similarity is Perron similarity. In proposition below we meet Perron similarities in connection with congruence transformation that connects a diagonal matrix with a nonnegative matrix $C$ in SN-Trifactorization $BCB^T$.

\begin{proposition}\label{thm:UT}
Let $A \in \mc E_n$ with $\rk(A)=\spr(A)=r$, the Perron eigenvalue $\lambda_1$ and the normalised Perron eigenvector  $u$.  Let
$$A=U(\lambda_1 \oplus D_1)U^T=BCB^T,$$
where $D_1$ is a diagonal matrix with nonzero entries on the diagonal,  $U=\npmatrix{u & U_1} \in \R^{n \times r}$, $U^TU=I_r$, $B \in \R_+^{n \times r}$ and $C \in \R_+^{r \times r}$.

Then there exists an invertible matrix $T$ with the first column of $T$ and the first row of $T^{-1}$ both nonnegative, so that $B=UT^{-1}$ and $C=T(\lambda_1 \oplus D_1)T^T$. If $A$ is irreducible then the first column of $T$ and the first row of $T^{-1}$ are both positive. 
\end{proposition}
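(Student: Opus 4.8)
The plan is to extract the change-of-basis matrix $T$ from the two factorizations of $A$ and then verify the Perron sign condition on its first column and the first row of $T^{-1}$. Since $A = U(\lambda_1 \oplus D_1)U^T = BCB^T$ and $\rk(A) = r$, both $U$ and $B$ have rank $r$ and hence the same column space, namely the column space of $A$. Therefore there is an invertible $T \in \R^{r \times r}$ with $B = UT^{-1}$. Substituting into $BCB^T = A$ gives $UT^{-1}CT^{-T}U^T = U(\lambda_1 \oplus D_1)U^T$; multiplying on the left by $U^T$ and on the right by $U$ and using $U^TU = I_r$ yields $T^{-1}CT^{-T} = \lambda_1 \oplus D_1$, i.e. $C = T(\lambda_1 \oplus D_1)T^T$, as claimed.

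Next I would pin down the first column of $T$. The Perron eigenvector satisfies $Au = \lambda_1 u$, and from $A = U(\lambda_1 \oplus D_1)U^T$ together with $U^TU = I_r$ one reads off that $u$ is the first column of $U$ (this is exactly how the factorization is set up), so $U^T u = e_1$. Then $B^T u = T^{-T}U^T u = T^{-T} e_1$, which is the first column of $T^{-T}$, equivalently the first row of $T^{-1}$ written as a column. Since $B$ is nonnegative and $u$ is the (nonnegative) Perron eigenvector, $B^T u \ge 0$, so the first row of $T^{-1}$ is nonnegative. For the first column of $T$: from $C = T(\lambda_1 \oplus D_1)T^T$ one has $C (\lambda_1 \oplus D_1)^{-1} = T(\lambda_1 \oplus D_1)T^T(\lambda_1 \oplus D_1)^{-1}$, but a cleaner route is to note $CT^{-T} = T(\lambda_1 \oplus D_1)$, so the first column of $CT^{-T}$ equals $\lambda_1$ times the first column of $T$. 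Since $C \ge 0$ and the first column of $T^{-T}$ is the (nonnegative) vector $B^T u$ just identified, the product $C(B^T u)$ is nonnegative, and $\lambda_1 > 0$ by Perron–Frobenius, so the first column of $T$ is nonnegative. This establishes that $T$ is a Perron similarity in the sense defined.

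Finally, for the irreducible case I would upgrade "nonnegative" to "positive." Here $A \in \hat{\mc S}_n^+$, so by Perron–Frobenius the Perron eigenvector $u$ is strictly positive and $\lambda_1 > 0$ strictly. The first row of $T^{-1}$ is $B^T u$; if some coordinate of this vector were zero, the corresponding column $b$ of $B$ would satisfy $b^T u = 0$ with $b \ge 0$ and $u > 0$, forcing $b = 0$, i.e. $B$ has a zero column. But a zero column in $B$ contradicts minimality $\spr(A) = r$ (one could delete it and factor through $r-1$ columns), so $B$ has no zero column and the first row of $T^{-1}$ is strictly positive. For the first column of $T$, which equals $\lambda_1^{-1} C (B^T u)$: since $B^T u > 0$, this is zero in coordinate $i$ only if the $i$-th row of $C$ is zero; but then the $i$-th row and column of $C$ vanish (by symmetry), and one checks $B C B^T$ is unchanged after deleting the $i$-th column of $B$ and the $i$-th row/column of $C$, again contradicting minimality. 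Hence the first column of $T$ is strictly positive as well. The one point deserving care — and the main obstacle — is the argument that a zero row of $C$ (or zero column of $B$) genuinely contradicts the minimality $\spr(A) = \rk(A) = r$: one must confirm that removing such a redundant index leaves a valid SN-Trifactorization, which is immediate from block multiplication, and that the remaining $B$ still has full column rank $r-1 < \rk(A)$, which is the desired contradiction since $\rk(A) \le \spr(A)$ always.
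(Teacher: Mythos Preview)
Your proof is correct and follows essentially the same route as the paper: obtain $T$ from the fact that $U$ and $B$ share column space, deduce $C=T(\lambda_1\oplus D_1)T^T$ from $U^TU=I_r$, then read off the first row of $T^{-1}$ as $u^TB\ge 0$ and the first column of $T$ from $CT^{-T}e_1=\lambda_1 Te_1$.

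One small cleanup for the irreducible case: your minimality argument in the last paragraph works, but the remark about ``the remaining $B$ still has full column rank $r-1$'' is unnecessary and slightly muddled --- the contradiction is simply that deleting a zero column of $B$ (or a zero row/column of $C$) yields an SN-Trifactorization of size $r-1$, whence $\spr(A)\le r-1<r$. A cleaner alternative, which the paper tacitly uses, is to note that $\rk(B)=r$ and $\rk(C)=r$ directly (the latter since $C=T(\lambda_1\oplus D_1)T^T$ with $T$ invertible), so neither can have a zero column; no appeal to minimality is needed.
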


  \begin{proof}
Let $A=BCB^T$, where $B \in \R_+^{n \times r}$ and $C \in \R_+^{r \times r}$. Since both $B$ and $U$ have rank $r$, and the span of columns of $B$ is equal to the span of columns of $U$, there exists an invertible matrix $T$ satisfying $B=UT^{-1}$. Now, $T^{-1}C(T^{-1})^T=\lambda_1 \oplus D_1$, hence $C=T(\lambda_1 \oplus D_1)T^T$. 

From, 
 $$ u^TB=u^TUT^{-1}=e_1^T T^{-1},$$
 where $e_1=\npmatrix{1 & 0 & \ldots & 0}^T$, we see that the first row of $T^{-1}$ is necessarily nonnegative. 
 Now 
 $$e_1 T^{-1}C=e_1 T^{-1}T(\lambda_1 \oplus D_1)T^T=\lambda_1 e_1^T T^T$$
 implies, that the first column of $T$ is nonnegative. 
 
 In the irreducible case, we know that $u$ is positive, and $B$ and $C$ have no columns equal to zero. The conclusion follows. 
 \end{proof}

If $u$ is the Perron eigenvector of a symmetric nonnegative matrix $A$ (normalised so that $u^Tu=1$), then it is straightforward to see that $\rk(A+\alpha uu^T)=\rk(A)$, and if we require  $\alpha \geq 0$, then clearly $A+\alpha uu^T$ remains nonnegative.    
The following theorem shows that if $\alpha$ is chosen to be sufficiently large, then $\spr(A+\alpha uu^T)$ drops down to $\rk(A)$. This type of perturbation was considered in connection with the completely positive rank in \cite{MR3365273}.

\begin{theorem}\label{prop:4.1}
Let $A\in \mc S_n^+$ be an irreducible symmetric nonnegative matrix with the Perron eigenvector $u$, $u^Tu=1$. Then $\spr(A+\alpha uu^T) \leq \spr(A)$ for all $\alpha \geq 0$, and there exists $\alpha_0$, so that $A+\alpha uu^T \in \mc E_n$ for all $\alpha \geq \alpha_0$. 
\end{theorem}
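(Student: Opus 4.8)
The plan is to establish the two assertions separately, in each case by exhibiting an explicit SN-Trifactorization.

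For $\spr(A+\alpha uu^T)\le\spr(A)$ I would start from an optimal factorization $A=BCB^T$ with $B\in\R_+^{n\times k}$, $C\in\mc S_k^+$ and $k=\spr(A)$, and let $\lambda_1>0$ be the Perron eigenvalue. The key observation is that $w:=\lambda_1^{-1}CB^Tu$ is nonnegative, being a product of nonnegative matrices, and satisfies $Bw=\lambda_1^{-1}BCB^Tu=\lambda_1^{-1}Au=u$. Hence $uu^T=B(ww^T)B^T$ and
$$A+\alpha uu^T=B\left(C+\alpha ww^T\right)B^T,$$
with $C+\alpha ww^T\in\mc S_k^+$ for every $\alpha\ge 0$; this is an SN-Trifactorization of size $k$, so $\spr(A+\alpha uu^T)\le k=\spr(A)$.

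For the second assertion, set $r:=\rk(A)$. Since $u$ is an eigenvector of $A$ for $\lambda_1>0$, adding $\alpha uu^T$ merely replaces that eigenvalue by $\lambda_1+\alpha>0$, so $\rk(A+\alpha uu^T)=r$ for all $\alpha\ge 0$; in view of Proposition \ref{prop:basic1} it then suffices to build, for all large $\alpha$, an SN-Trifactorization of $A+\alpha uu^T$ with $r$ columns. I would first fix a nonnegative factor $B$: take a basis $z_1,\dots,z_{r-1}$ of the orthogonal complement of $u$ inside $\mathrm{col}(A)$ and set
$$B:=\npmatrix{u+\varepsilon z_1 & \cdots & u+\varepsilon z_{r-1} & u-\varepsilon(z_1+\cdots+z_{r-1})}\in\R^{n\times r}.$$
A short computation shows that $\mathrm{col}(B)=\mathrm{col}(A)$, that $\rk B=r$, and that $B(\tfrac1r\mathbf 1)=u$, so $w:=B^+u=\tfrac1r\mathbf 1$ is strictly positive; and since $u>0$ by irreducibility, $B\in\R_+^{n\times r}$ once $\varepsilon>0$ is small enough. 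Fixing such an $\varepsilon$, put $C_0:=B^+A(B^+)^T$ (symmetric) and $C:=C_0+\alpha ww^T=C_0+\tfrac{\alpha}{r^2}\mathbf 1\mathbf 1^T$. Using that $BB^+$ is the orthogonal projection onto $\mathrm{col}(B)=\mathrm{col}(A)$, and hence fixes $A$ on both sides, together with $Bw=u$, one gets $BCB^T=A+\alpha uu^T$. Finally, $\mathbf 1\mathbf 1^T$ has all entries positive, so there is $\alpha_0\ge 0$ with $C\in\mc S_r^+$ for every $\alpha\ge\alpha_0$; for such $\alpha$ this yields $\spr(A+\alpha uu^T)\le r=\rk(A+\alpha uu^T)$, that is, $A+\alpha uu^T\in\mc E_n$.

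The step I expect to be the real obstacle is the construction of $B$ in the second part: one needs a nonnegative matrix whose columns span $\mathrm{col}(A)$ \emph{and} with respect to which $u$ has a \emph{strictly} positive coordinate vector. Strict positivity is what matters, since a vanishing coordinate $w_i$ would leave the $i$-th row and column of $C$ equal to those of $C_0$, over which one has no sign control. The device above --- perturbing the repeated column $u$ by small vectors from $u^\perp\cap\mathrm{col}(A)$ whose contributions cancel in the sum --- is precisely what makes both requirements hold simultaneously; the remaining verifications (column space, rank, the projection identity, and the entrywise positivity of $C$ for large $\alpha$) are routine linear algebra.
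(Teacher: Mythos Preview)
Your proof is correct, and the core idea matches the paper's: arrange that the middle factor picks up a strictly positive rank-one contribution growing with $\alpha$, which eventually dominates. The packaging differs in two places worth noting. For the first assertion the paper keeps $C$ fixed and replaces $B$ by $(I+\beta uu^T)B$, computing $(I+\beta uu^T)BCB^T(I+\beta uu^T)=A+\lambda_1(2\beta+\beta^2)uu^T$; you instead keep $B$ and replace $C$ by $C+\alpha ww^T$ with $w=\lambda_1^{-1}CB^Tu$, which is a touch cleaner. For the second assertion the paper works from the spectral decomposition $A=\npmatrix{u & U_1}(\lambda_1\oplus D_1)\npmatrix{u & U_1}^T$ together with an auxiliary orthogonal matrix $Q=\npmatrix{q_1 & Q_1}$ with $q_1>0$, setting $B(\beta)=\beta u q_1^T+U_1Q_1^T$ and $C(\alpha,\beta)=\tfrac{\lambda_1+\alpha}{\beta^2}q_1q_1^T+Q_1D_1Q_1^T$; your explicit perturbation of repeated copies of $u$ and use of the pseudoinverse accomplish exactly the same thing without the spectral data. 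Your route is a bit more self-contained, while the paper's parametrization by $(\alpha,\beta,Q)$ (and, more generally, by an invertible $S$ in place of $Q$) is what it then exploits to search for the \emph{minimal} admissible $\alpha$ in the discussion following the theorem.
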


\begin{proof}
Let $A=BCB^T$, and $Au=\lambda_1 u$ with $u^Tu=1$. Direct calculation gives us:
$$(B+\beta uu^TB)C(B+\beta uu^TB)^T=A+\lambda_1(2 \beta+\beta^2)uu^T=A+\alpha uu^T,$$ for an appropriate choice of $\beta \geq 0$. This proves $\spr(A+\alpha uu^T) \leq \spr(A)$. 

To prove that there exists $\alpha$ with $\spr(A+\alpha uu^T) =\rk(A)=:r$, we start with a spectral decomposition of $A+\alpha uu^T$:
$$A+\alpha uu^T=\npmatrix{u & U_1}\npmatrix{\lambda_1+\alpha & 0 \\ 0 & D_1} \npmatrix{u^T \\ U_1^T},$$
where $D_1$ is a $(r-1)\times (r-1)$ diagonal matrix containing nonzero, non-Perron eigenvalues of $A$, and $U_1 \in \R^{n \times (r-1)}$ a matrix whose columns are equal to the corresponding normalised eigenvectors of $A$. 
Let $\beta >0$,  $q_1 \in \R_+^r$ be a positive vector satisfying $q_1^Tq_1=1$, and 
$Q=\npmatrix{q_1 & Q_1} \in \R^{r\times r}$ an orthogonal matrix. 
Let 
\begin{align*}
B(\beta)&:=\npmatrix{v & U_1}(\beta \oplus I_{r-1})Q^T,\\
 C(\alpha, \beta)&:=Q(\frac{1}{\beta} \oplus I_{r-1}) \npmatrix{\lambda_1+\alpha & 0 \\ 0 & D_1}(\frac{1}{\beta} \oplus I_{r-1})Q^T.
 \end{align*}
 Then $A+\alpha uu^T=B(\beta)C(\alpha, \beta)B(\beta)^T$ for all $\alpha, \beta >0.$ It remains to show that we can choose $\alpha >0$ and $\beta >0$ so that $B(\beta)>0$ and $C(\alpha, \beta)>0$. Note that:
 $$B(\beta)=\beta uq_1^T+U_1Q_1^T,$$
 and since $uq_1^T>0$ we can choose $\beta >0$ so that $B(\beta)>0$. On the other hand, we have:
 $$C(\alpha, \beta)=\frac{(\lambda_1+\alpha)}{\beta^2}q_1 q_1^T+Q_1D_1Q_1^T.$$
Since $q_1q_1^T>0$, we can choose $\alpha$ so that $C(\alpha, \beta)>0$ for any fixed $\beta>0$. 
\end{proof}

From Theorem \ref{prop:4.1} if follows that in order to understand $\mc E_n$, it is enough to study $\partial\mc E_n$, defined as the set of matrices $A\in \mc E_n$ with the property that $A-\alpha uu^T \not\in \mc E_n$ for any $\alpha >0$, where $u$ is the Perron eigenvector of $A$. In particular, all irreducible matrices in $\mc E_n$ that contain at least one zero entry are necessarily in $\partial \mc E_n$. Hence, given $A \in \mc S_n^+$, we would like to determine minimal $\alpha$ with $\rk(A+\alpha uu^T)=\spr(A+\alpha uu^T)$, or equivalently, we are looking for $\alpha$ with $A+\alpha uu^T \in \partial \mc E_n$. 

 From the proof of Theorem \ref{prop:4.1} we can produce upper bounds for $\alpha$ using different orthogonal matrices $Q$. In fact, in the proof, an orthogonal matrix $Q$ can be replaced by any $r \times r$ invertible matrix $S$ with the first column of $S$ and the first row of $S^{-1}$ both positive. Indeed, if we define:
\begin{align}\label{eq:BS}
B(\beta,S)&:=U(\beta \oplus I_{r-1})S^{-1},\\ \label{eq:CS}
 C(\alpha, \beta,S)&:=S(\frac{1}{\beta} \oplus I_{r-1}) \npmatrix{\lambda_1+\alpha & 0 \\ 0 & D_1}(\frac{1}{\beta} \oplus I_{r-1})S^T,
 \end{align}
 then  $A+\alpha vv^T=B(\beta,S)C(\alpha, \beta,S)B(\beta,S)^T$. As in the proof above, we can find $\beta$ that makes $B(\beta, S)$ nonnegative, and given $\beta$ and $S$ we can find $\alpha$ so that $C(\alpha,\beta,S)$ is nonnegative. Theorem \ref{thm:UT} implies that optimisation over all such invertible matrices $S$ will produce the optimal $\alpha$.  We explore this idea in Example \ref{ex:4.1}.

\begin{example} \label{ex:4.1}
The matrix
\begin{equation}\label{eq:AExample}
A=\left(
\begin{array}{cccc}
 0 & 2 & 1 & 1 \\
 2 & 0 & 1 & 1 \\
 1 & 1 & 0 & 2 \\
 1 & 1 & 2 & 0 \\
\end{array}
\right). 
\end{equation}
has the normalized Perron eigenvector  $v=\frac12\left(\begin{array}{cccc}  1 & 1 & 1 & 1 \end{array} \right)^T$, $\rk(A)=3$, and $\rk_+(A)=\spr(A)=4$. To show that $\rk_+(A)=4$, we can consider the Boolean rank of its derangement matrix
$$D=\left(
\begin{array}{cccc}
 0 & 1 & 1 & 1 \\
 1 & 0 & 1 & 1 \\
 1 & 1 & 0 & 1 \\
 1 & 1 & 1 & 0 \\
\end{array}
\right),$$
see \cite{MR2563025}. It is known that the Boolean rank of $D$ is $\min\{k, 4 \le \binom{k}{k/2}\}=4$, see \cite{MR657202}.

Below we consider three different invertible matrices $S_i$, $i=1,2,3$. For each of them, we first compute the minimal $\beta$ that makes $B(\beta,S)\geq 0$, using this optimal $\beta$ we then compute the minimal $\alpha$ that satisfies $C(\alpha,\beta,S) \geq 0$.

Taking $$S_1=\frac{1}{\sqrt{6}}\left(\begin{array}{ccc}  
\sqrt{2} & \sqrt{3} & 1 \\
\sqrt{2} & -\sqrt{3} & 1 \\
\sqrt{2} & 0 & -2
\end{array} \right)$$
gives us optimal  $\beta_1 = 2$ and $\alpha_1 = 12$. 
Hence, $\spr(A+12vv^T)=3$, as it is illustrated by $A+12vv^T =B(2,S_1)C(12,2,S_1)B(2,S_1)^T$ with
$$ B(2,S_1)=\frac{1}{2\sqrt{3}}\left(
\begin{array}{ccc}
 4 & 1 & 1  \\
 0 & 3 & 3 \\
 2 & 2+\sqrt{3} & 2-\sqrt{3} \\
 2 & 2-\sqrt{3} & 2+\sqrt{3} \\
\end{array}
\right) \text{ and } C(12,2,S_1)=\left(
\begin{array}{ccc}
 0 & 2 & 2 \\
 2 & 0 & 2 \\
 2 & 2 & 0 \\
\end{array}
\right). $$
Notice that this choice of $S_1$ yields an integer matrix $A+12vv^T$. 

Taking $$S_2=\frac{1}{2\sqrt{3}}\left(\begin{array}{ccc}  
2 & 2 & 2 \\
2 & -1+\sqrt{3} & -1-\sqrt{3} \\
2 & -1-\sqrt{3} & -1+\sqrt{3}
\end{array} \right)$$
gives us optimal  $\beta_2 = \frac12(\sqrt{6}-\sqrt{2})$ and $\alpha_2 = 4(1+\sqrt{3})\approx 10.92$. 
Thus  $\spr(A+4(1+\sqrt{3})vv^T)=3$, as can be illustrated by the SN-Trifactorization using the following factors:
$$ B({\textstyle \frac12(\sqrt{6}-\sqrt{2})},S_2)=\frac{1}{6\sqrt{2}}\left(
\begin{array}{ccc}
 3-\sqrt{3} & 6+2\sqrt{3} & 2\sqrt{3}  \\
 3+3\sqrt{3} & 0 & 6 \\
 3-\sqrt{3} & 2\sqrt{3} & 6+2\sqrt{3} \\
 3+3\sqrt{3} & 6 & 0 \\
\end{array}
\right)$$  and 
$$C(4(1+\sqrt{3}),{\textstyle \frac12(\sqrt{6}-\sqrt{2})},S_2)=\left(
\begin{array}{ccc}
 0 & 2 & 2 \\
 2 & 0 & 2 \\
 2 & 2 & 0 \\
\end{array}
\right). $$
This time the matrix $A+\alpha_2 vv^T$ is no longer an integer matrix, but $C(4(1+\sqrt{3}),{\textstyle \frac12(\sqrt{6}-\sqrt{2})},S_2)$ is. 

Finally, taking non-orthogonal
$$S_3^{-1}=\left(
\begin{array}{ccc}
 1 & 1 & 1 \\
 -1 & 1 & s \\
 -1 & s & 1 \\
\end{array}
\right),$$
where $s$ is the real root of  $p(s)=s^3+ s^2+5s+1$, approximately equal to $-0.207$, we get 
$$ B(\sqrt{2},S_3)=\frac{1}{\sqrt{2}}\left(
\begin{array}{ccc}
 0 & 1+s & 2  \\
 2 & 1-s & 0 \\
 0 & 2 & 1+s \\
 2 & 0 & 1-s \\
\end{array}
\right)$$
and  
$$C(\alpha_3,\sqrt{2},S_3)=\left(
\begin{array}{ccc}
 0 & \frac{2}{s^2+4s+3} & \frac{2}{s^2+4s+3} \\
 \frac{2}{s^2+4s+3} & 0 & \frac{4(2s^2+s+1)}{(s+3)(s^2-1)^2} \\
 \frac{2}{s^2+4s+3} & \frac{4(s^2+s+1)}{(s+3)(s^2-1)^2} & 0 \\
\end{array}
\right) \ge 0, $$
where $\alpha_3= \frac{4(-s^2-2s+1)}{(s^2-1)^2}$ is the real root of  $q(x)=x^3+ 2x^2-64x-256$, approximately equal to $8.71$. For this choice of $S_3$, and resulting $\alpha_3$ we loose integer entries in $A+\alpha_3 vv^T$ as well as in both $B(\sqrt{2},S_3)$ and $C(\alpha_3,\sqrt{2},S_3)$.
\end{example}

Ad hoc approach to find upper bounds for $\alpha$ demonstrated in Example \ref{ex:4.1} does not address the question, how to determine if $\alpha$ obtained is indeed optimal.

\begin{lemma}\label{lem:both positive}
Let $A\in \mc S_n^+$ with $\rk(A)=\spr(A)$ and $A=BCB^T$, where $B \in \R_+^{n \times r}$, $C \in \R_+^{r \times r}$, and at least one of the matrices $B$ or $C$ is positive. Then $A \not\in \partial \mc E_n$. 
\end{lemma}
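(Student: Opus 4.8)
The plan is to show that if the factorization $A=BCB^T$ has a positive factor, then we can perturb $A$ by subtracting a small positive multiple of $uu^T$ while remaining inside $\mc E_n$, which is exactly the assertion $A\notin\partial\mc E_n$. The strategy mirrors the construction in the proof of Theorem \ref{prop:4.1}, but run \emph{backwards}: there we added $\alpha uu^T$ and showed the SNT-rank drops to the rank; here the matrix already satisfies $\rk(A)=\spr(A)$, and we want to show this persists under a small decrease along the Perron direction. The key point is that positivity of $B$ or $C$ gives us room (an open condition) to move.

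First I would set up the spectral picture as in Theorem \ref{thm:UT}: write $A=U(\lambda_1\oplus D_1)U^T$ with $U=\npmatrix{u & U_1}$, $U^TU=I_r$, and, by that theorem, fix an invertible $T$ with first column of $T$ and first row of $T^{-1}$ positive (using irreducibility, which is forced since $A\in\partial\mc E_n$ would require a zero entry anyway — but in any case we may assume it), so that $B=UT^{-1}$ and $C=T(\lambda_1\oplus D_1)T^T$. Then, imitating \eqref{eq:BS}--\eqref{eq:CS}, I would introduce the one-parameter family
\begin{align*}
B(\beta)&:=U(\beta\oplus I_{r-1})T^{-1},\\
C(\mu,\beta)&:=T({\textstyle\frac1\beta}\oplus I_{r-1})\npmatrix{\mu & 0 \\ 0 & D_1}({\textstyle\frac1\beta}\oplus I_{r-1})T^T,
\end{align*}
which satisfies $B(\beta)C(\mu,\beta)B(\beta)^T=U(\mu\oplus D_1)U^T$ for all $\mu,\beta$. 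At $\beta=1$, $\mu=\lambda_1$ this recovers $B,C$ and $A$. Taking $\mu=\lambda_1-\alpha$ with small $\alpha>0$ produces exactly $A-\alpha uu^T$, which has rank $r$ (since $\lambda_1-\alpha\neq 0$ for small $\alpha$) and the factorization $B(\beta)C(\lambda_1-\alpha,\beta)B(\beta)^T$ of size $r$.

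It then remains to choose $\beta$ near $1$ so that both $B(\beta)$ and $C(\lambda_1-\alpha,\beta)$ stay nonnegative for all sufficiently small $\alpha\ge 0$. This is where the positivity hypothesis enters, and it is the main obstacle to handle cleanly. If $C$ is positive, take $\beta=1$: then $B(1)=B\ge 0$ regardless, and $C(\lambda_1-\alpha,1)=C-\alpha\, t_1 t_1^T$ where $t_1$ is the first column of $T$; since $C>0$ is an open condition, $C(\lambda_1-\alpha,1)>0$ for all small $\alpha\ge0$. If instead $B$ is positive, I would instead exploit the freedom in $\beta$: one checks $B(\beta)=\beta\, u\,r_1^T + U_1 T^{-1}_{\text{(rest)}}$ where $r_1^T$ is the first row of $T^{-1}$ (positive), so $B(\beta)\ge B>0$ for $\beta\ge 1$ by monotonicity in the rank-one positive term, while the entries of $C(\mu,\beta)$ depend continuously on $(\mu,\beta)$; decreasing $\mu$ slightly from $\lambda_1$ and nudging $\beta$ if needed keeps $C(\mu,\beta)$ within the (closed) nonnegative cone because the only term affected to first order is the $q_1q_1^T$-type rank-one piece, whose coefficient $\mu/\beta^2$ we can keep positive. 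In either case we obtain, for all sufficiently small $\alpha\ge 0$, a nonnegative SN-Trifactorization of $A-\alpha uu^T$ of size $r=\rk(A-\alpha uu^T)$, hence $A-\alpha uu^T\in\mc E_n$, so $A\notin\partial\mc E_n$. The delicate part is verifying that the nonnegativity of the \emph{other}, possibly non-positive, factor is not destroyed by the perturbation — this needs the observation that the perturbation changes that factor only through a single controllable rank-one term, together with a compactness/continuity argument on the finitely many entries that sit exactly at zero.
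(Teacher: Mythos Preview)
Your treatment of the case $C>0$ is correct and is essentially what the paper does implicitly (reduce to the both-positive situation via Theorem~\ref{prop:4.1}): with $\beta=1$ one has $B(1)=B\ge 0$ and $C(\lambda_1-\alpha,1)=C-\alpha\,t_1t_1^T>0$ for small $\alpha>0$, giving $A-\alpha uu^T\in\mc E_n$.

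The case $B>0$, however, has a genuine gap. Write $C(\mu,\beta)=\dfrac{\mu}{\beta^2}\,t_1t_1^T+T_1D_1T_1^T$, where $T=\npmatrix{t_1 & T_1}$. At $(\mu,\beta)=(\lambda_1,1)$ this equals $C$, and if $C_{ij}=0$ for some $(i,j)$ then $(T_1D_1T_1^T)_{ij}=-\lambda_1(t_1t_1^T)_{ij}<0$, so nonnegativity at that entry forces $\mu/\beta^2\ge\lambda_1$. Your choice $\beta\ge 1$ together with $\mu=\lambda_1-\alpha<\lambda_1$ gives $\mu/\beta^2<\lambda_1$, so $C(\mu,\beta)$ becomes negative there; ``keeping $\mu/\beta^2$ positive'' is not the relevant condition. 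The fix within your own framework is to move $\beta$ the other way: set $\beta=\sqrt{(\lambda_1-\alpha)/\lambda_1}<1$, so that $C(\lambda_1-\alpha,\beta)=C\ge 0$ exactly, and then use the openness of $B>0$ (not monotonicity) to get $B(\beta)=B-(1-\beta)\,u\,r_1^T>0$ for $\alpha$ small. With this correction your argument goes through.

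The paper takes a different and somewhat slicker route that avoids the spectral parametrisation altogether: pick any positive $Y\in\R_+^{r\times r}$ and set $T(\epsilon)=I-\epsilon Y$, an invertible $M$-matrix for small $\epsilon>0$, so $T(\epsilon)^{-1}>0$. If $B>0$ then $\hat B(\epsilon):=BT(\epsilon)>0$ for small $\epsilon$, while $\hat C(\epsilon):=T(\epsilon)^{-1}C(T(\epsilon)^{-1})^T>0$ automatically (using that $C$ has no zero columns since $\rk(C)=r$); the case $C>0$ is symmetric. Either way one obtains a factorisation of $A$ with \emph{both} factors positive, and then the conclusion follows from the both-positive case. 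This trick sidesteps the bookkeeping with $(\mu,\beta)$ and the Perron data entirely.
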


\begin{proof}
Let $A=BCB^T$. Following the proof of Theorem \ref{prop:4.1} it is clear that $A \not\in \partial \mc E_n$, if both $B$ and $C$ are positive. Hence, we may assume that $B>0$ or $C>0$, but not both.  

 Let $Y \in \R_+^{r \times r}$ be positive and $T(\epsilon):=I-\epsilon Y$. For all sufficiently small $\epsilon >0$, $T(\epsilon)$ is an invertible M-matrix and hence $T(\epsilon)^{-1}>0$.  
 
If $B>0$, then $\hat B(\epsilon):=BT(\epsilon)>0$ for a sufficiently small $\epsilon >0$, and $\hat C(\epsilon):=T(\epsilon)^{-1}C(T(\epsilon)^{-1})^T>0$ for all $\epsilon >0$. Since $A=\hat B(\epsilon)\hat C(\epsilon)\hat B(\epsilon)^T$ we conclude $A \not\in \partial \mc E_n$. 

Similarly, if $C>0$, then $\hat C(\epsilon):=T(\epsilon)CT(\epsilon)^T>0$ for all sufficiently small $\epsilon >0$, and $\hat B(\epsilon):=BT(\epsilon)^{-1}$ for all $\epsilon >0$. The conclusion follows. 
\end{proof}

\begin{theorem}\label{thm:duality}
Let $A \in \mc E_n$ with $\rk(A)=r$, $A=BCB^T$, $B \in \R_+^{n \times r}$ and $C \in \R_+^{r \times r}$. If  the system: \begin{equation}
\begin{split}
 &X\in \R_+^{n \times r} \text{ and }W \in \R_+^{r \times r} \\
    & W=W^T \\ 
    & X \circ B=0, \, W \circ C=0, \\ 
    & WC=B^TX, 
\end{split}
\end{equation} implies $X=W=0$, then
 $A \not\in \partial \mc E_n$. 
\end{theorem}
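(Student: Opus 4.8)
The plan is to show that, under the hypothesis, $A$ can be moved in the direction $-uu^T$ while remaining in $\mc E_n$; by the definition of $\partial\mc E_n$ this is exactly the statement $A\notin\partial\mc E_n$. As in the proof of Theorem~\ref{prop:4.1}, fix a spectral decomposition $A=U(\lambda_1\oplus D_1)U^T$ with $U=\npmatrix{u & U_1}$, $U^TU=I_r$, and write $\Lambda(\alpha)=(\lambda_1+\alpha)\oplus D_1$, so that $U\Lambda(\alpha)U^T=A+\alpha uu^T$. Since $\rk(B)=\rk(A)=r$ and the columns of $B$ and of $U$ both span the column space of $A$, there is a unique invertible $T_0\in\R^{r\times r}$ with $B=UT_0^{-1}$, and then $C=T_0\Lambda(0)T_0^T$; by Theorem~\ref{thm:UT} the first column $t:=T_0e_1$ of $T_0$ is nonnegative. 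For $K\in\R^{r\times r}$ and a scalar $a$ put $T_\epsilon=(I+\epsilon K)T_0$, $B_\epsilon=UT_\epsilon^{-1}$ and $C_\epsilon=T_\epsilon\Lambda(\epsilon a)T_\epsilon^T$. Then for all small $\epsilon$ one has $B_\epsilon C_\epsilon B_\epsilon^T=U\Lambda(\epsilon a)U^T=A+\epsilon a\,uu^T$, and expanding to first order gives $B_\epsilon=B-\epsilon BK+O(\epsilon^2)$ and $C_\epsilon=C+\epsilon(KC+CK^T+a\,tt^T)+O(\epsilon^2)$.

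Suppose we can choose $K$ and $a<0$ so that $(-BK)_{ij}>0$ whenever $B_{ij}=0$ and $(KC+CK^T+a\,tt^T)_{ij}>0$ whenever $C_{ij}=0$. Since $B$ and $C$ have finitely many entries, it follows that for a sufficiently small $\epsilon>0$ every entry of $B_\epsilon$ and of $C_\epsilon$ is positive: entries at which $B$ (resp.\ $C$) is positive stay positive by continuity, while those at which it vanishes equal $\epsilon$ times a positive first-order term. For such an $\epsilon$ the matrix $A+\epsilon a\,uu^T=B_\epsilon C_\epsilon B_\epsilon^T$ is nonnegative and symmetric, of rank $r$ (because $\lambda_1+\epsilon a\neq0$), so $\spr(A+\epsilon a\,uu^T)\le r=\rk(A+\epsilon a\,uu^T)$, i.e.\ $A+\epsilon a\,uu^T\in\mc E_n$ with $\epsilon a<0$; hence $A\notin\partial\mc E_n$. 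So it remains to produce such a pair $(K,a)$.

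This amounts to solving a finite system of strict linear inequalities in $(K,a)\in\R^{r\times r}\times\R$ (namely $-a>0$, $(-BK)_{ij}>0$ on the zero set of $B$, and $(KC+CK^T+a\,tt^T)_{ij}>0$ on the zero set of $C$), so by Gordan's theorem of the alternative it is solvable unless some nontrivial nonnegative combination of the corresponding linear functionals vanishes identically. I would compute this dual condition: assign a multiplier $\lambda_0\ge0$ to $-a>0$, a nonnegative matrix $X$ supported off $\supp(B)$ (equivalently $X\circ B=0$) to the $B$-inequalities, and a nonnegative symmetric matrix $W$ with $W\circ C=0$ to the $C$-inequalities. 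Separating the $K$-part from the $a$-part of the resulting identity, and using $\langle W,KC\rangle_F=\langle W,CK^T\rangle_F=\langle WC,K\rangle_F$ together with $\langle X,BK\rangle_F=\langle B^TX,K\rangle_F$, the dual condition becomes $B^TX=2WC$ and $\lambda_0=t^TWt$. The second equality merely fixes $\lambda_0=t^TWt$, which is automatically nonnegative since $t\ge0$ and $W\ge0$; and $B^TX=2WC$ has a nonzero solution $(X,W)$ with the stated support and symmetry constraints if and only if the system in the statement does (replace $X$ by $2X$). Thus the hypothesis that the stated system forces $X=W=0$ is precisely the statement that Gordan's alternative has no nontrivial dual solution, whence the desired $(K,a)$ exists and the proof is complete.

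The main obstacle is the computation in the last paragraph: linearising $C_\epsilon$ correctly, reading off the constraint functionals, and matching the dual produced by Gordan's theorem to the one in the statement (up to the harmless rescaling of $X$ and the observation that the extra multiplier $\lambda_0=t^TWt$ is forced to be nonnegative via Theorem~\ref{thm:UT}). A secondary point requiring care is the passage from a first-order feasible direction to a genuine perturbation; this is legitimate here only because all the inequalities are strict, so an actual small step along $(K,a)$ keeps both $B_\epsilon$ and $C_\epsilon$ entrywise positive.
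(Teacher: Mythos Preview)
Your proof is correct, and the core mechanism---linearise a perturbation of the factorization and apply Gordan's alternative---is the same as the paper's. The genuine difference is in how the step in the $-uu^T$ direction is obtained.

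The paper decouples the two tasks. It first looks only for $Y$ such that $B(I-\epsilon Y)>0$ and $(I-\epsilon Y)^{-1}C(I-\epsilon Y^T)^{-1}>0$; the linearised system involves only $Y$, and its Gordan dual is \emph{exactly} the system in the statement, with no extra variables. Once both factors are strictly positive, Lemma~\ref{lem:both positive} (with $T(\epsilon)=I-\epsilon Y$ for a positive $Y$) is invoked to push $A$ backwards along $-uu^T$ while staying in $\mc E_n$.

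You instead carry the Perron shift explicitly: you set $T_\epsilon=(I+\epsilon K)T_0$, build $B_\epsilon C_\epsilon B_\epsilon^T=A+\epsilon a\,uu^T$ directly, and include $-a>0$ among the primal inequalities. This produces one extra dual multiplier $\lambda_0$, but your key observation---that $\lambda_0=t^TWt\ge 0$ is automatic because $t=T_0e_1\ge 0$ by Proposition~\ref{thm:UT}---collapses the dual to the same system (up to the harmless rescaling of $X$). Your approach is therefore self-contained and does not rely on Lemma~\ref{lem:both positive}; the price is having to set up the spectral decomposition, track the scalar $a$, and invoke Proposition~\ref{thm:UT} to dispose of $\lambda_0$. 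The paper's version is lighter on bookkeeping but needs that preliminary lemma as a black box. Both handle the passage from first-order feasibility to an actual small perturbation in the same way, via strictness of the inequalities.
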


\begin{proof}
By Lemma \ref{lem:both positive}, we know that, if the system of inequalities $BT > 0$ and $T^{-1}C(T^T)^{-1} >0$ has a solution for some invertible matrix $T$, then $A \not\in \partial\mc E_n$. In particular, if  $B(I-\epsilon Y) > 0$ and $(I-\epsilon Y)^{-1}C(I-\epsilon Y^T)^{-1} >0$ for some $Y \in \R^{r \times r}$ and $\epsilon >0$, then $A \not\in \partial \mc E_n$. 

Assume $A \in \partial \mc E_n$ and define $\mc Z(B):=\{(i,j); B_{ij}=0\}$ and $\mc Z(C):=\{(i,j); C_{ij}=0\}$. (Note that the assumption $A \in \partial \mc E_n$ implies $\mc Z(B)$ and $\mc Z(C)$ are not empty.)  From the formal expansion $(I-\epsilon Y)^{-1}=\sum_{i=0}^{\infty}(\epsilon Y)^i$, and looking at linear terms in $\epsilon$, we deduce that the system of linear inequalities:
\begin{align*}
    -(BY)_{ij}>0 \text{ for }(i,j) \in \mc Z(B)\\
    (YC+CY^T)_{ij}>0 \text{ for }(i,j) \in \mc Z(C)
\end{align*}
is not solvable for any $r \times r$ matrix $Y$. This system of linear inequalities is equivalent to the following one:
\begin{align*}
    &(I_k \otimes -B)[\mc Z(B),\_]\vc(Y)>0 \\
    &(C \otimes I_k +(I_k \otimes C)P)[\mc Z(C),\_]\vc(Y)>0,
\end{align*}
where $P$ is the permutation matrix satisfying $P\vc(Y)=\vc(Y^T)$. 
By the Transposition theorem of Gordan \cite{MR874114} this system is unsolvable if and only if the following dual system is solvable:
\begin{align*}
    & x \in \R_+^{|\mc Z(B)|}, z \in \R_+^{|\mc Z(C)|}, \text{ not both zero} \\
    & x^T (I_k \otimes -B)[\mc Z(B),\_]+z^T(C \otimes I_k +(I_k \otimes C)P)[\mc Z(C),\_]=0.
\end{align*}
Let $x$ and $z$ be solutions to the above, and let $x_0 \in \R^{r n}$ be the vector obtained from $x$ by inserting $(x_0)_i=0$ for $i$ that correspond to $\supp(B)$, i.e.  $x_0=\vc(X)$ with $X \circ B=0$. Similarly, let  $z_0 \in \R^{r^2}$ be the vector obtained from $z$ by inserting $(z_0)_i=0$ for $i$ that correspond to $\supp(C)$. In other words, $z_0=\vc(Z)$ satisfying $Z \circ C=0$.

The system above, rewritten in terms of $X$ and $Z$, becomes:
\begin{align*}
    & X\geq 0, Z\geq 0, \text{ not both zero} \\
    & X \circ B=0, \, Z \circ C=0 \\
    & ZC+Z^TC=B^TX.
\end{align*}
Introducing $W:=Z+Z^T$, we get: 
\begin{equation}\label{LPsystem}
\begin{split}
    & X\geq 0, W=W^T\geq 0, \text{ not both zero} \\ 
    & X \circ B=0, \, W \circ C=0 \\ 
    & WC=B^TX. 
\end{split}
\end{equation}
We have shown that the assumption $A \in \partial \mc E_n$ implies nonzero solution $(X,W)$ to the system \ref{LPsystem}. The conclusion of the theorem follows. 
\end{proof}

To illustrate how  Theorem \ref{thm:duality} is applied, we return to Example \ref{ex:4.1}.

\begin{example} \label{ex:4.1cont}
We assume all the notation and definitions from Example \ref{ex:4.1}. First we consider:
$$A+12vv^T =B(2,S_1)C(12,2,S_1)B(2,S_1)^T.$$
From Example \ref{ex:4.1}, we already know that $A+12vv^T \not\in \partial\mc E_n$. This is supported by the fact, that the only solution to the system \eqref{LPsystem} for $B(2,S_1)$ and $C(12,2,S_1)$ is $X=0$ and $W=0$. 

On the other hand, applying Theorem \ref{thm:duality} to 
$$A+\alpha_3 vv^T =B(\sqrt{2},S_3)C(\alpha_3,\sqrt{2},S_3)B(\sqrt{2},S_3)^T = B_3C_3B_3^T,$$
we get the following nonzero solutions to the system \eqref{LPsystem} for $B=B_3$ and $C=C_3$:  
$$ X = \left(
\begin{array}{ccc}
 x_{11} & 0 & 0 \\
 0 & 0 & x_{23} \\
 x_{31} & 0 & 0 \\
 0 & x_{42} & 0 
\end{array}\right), \ \  W=\left(
\begin{array}{ccc}
 w_{11} & 0 & 0 \\
 0 & w_{22} & 0 \\
 0 & 0 & w_{33}
\end{array}
\right)$$
with 
$$x_{11} = x_{31} = \frac{(C_3)_{12}}{(3+s)}w_{33}, \ \ 
x_{23} = x_{42} = \frac{(C_3)_{23}}{1-s}w_{33}, $$
$$w_{11} = \frac{2(C_3)_{23}}{(C_3)_{12}(1-s)}w_{33}, \ \ 
w_{22} = w_{33} .$$
While this does not prove $A+\alpha_3 vv^T \in \partial\mc E_n$, it does show that the factorisation $A=B_3C_3B_3^T$ cannot locally be moved to have positive factors.
\end{example}

\section{A Completion Problem} \label{sec:constructions}

For given nonnegative symmetric matrices $A_1$ and $A_2$, we consider the question of minimising the SNT-rank of 
\begin{equation}\label{eq:block}
    A=\npmatrix{A_1 & X \\ X^T & A_2},
\end{equation}
over all nonnegative matrices $X$ of appropriate order. 
We will consider two variants of this problem, one allowing any nonnegative $X$, and the other requiring $X$ to be positive.
Problems of this type occur in situations where only partial information on the data is known, and we desire unknown data to produce a matrix of low SNT-rank. Here our main motivation for considering this problem is to advance our understanding of matrices with low SNT-rank.

For $i=1,2$, let $A_i$ be $n_i \times n_i$ nonnegative symmetric matrices, and let $A$ be as above. We define:
\begin{align*}
    \spr(A_1,A_2)&:=\min\{\spr(A); A \text{ of the form }\eqref{eq:block} \text{ with } X \geq 0\}, \\
     \sprp(A_1,A_2)&:=\min\{\spr(A); A \text{ of the form }\eqref{eq:block} \text{ with } X > 0\}.
\end{align*}
The two ranks can happen to be the same for some given $A_1$ and $A_2$. The example below illustrates that  $\spr(A_1,A_2)<\spr^{>}(A_1,A_2)$ can also occur. 
 
 \begin{example}
Let $A_1=0$ and $A_2$ be a rank $1$ symmetric nonnegative matrix. Clearly, $\spr(A_1,A_2)=1$, and $\spr^>(A_1,A_2)=2$.
 \end{example}

Below we list some straightforward inequalities:
\begin{align*}
    \spr(A_1, A_2) &\leq \sprp(A_1,A_2), \\  \max\{\spr(A_1),\spr(A_2)\} &\leq \spr(A_1,A_2), \\ \spr(A_1,A_2)&\leq \spr(A_1)+\spr(A_2).
\end{align*}
 The last inequality holds, since we can always choose $X=0$.

 The corresponding question on low rank completion without nonnegativity constraints is resolved, and can be deduced from the main result in \cite{MR636217}. The solution depends on the inertia of matrices given on the block diagonal. 
 
\begin{definition}
Let $A \in \R^{n\times n}$ be a symmetric matrix. \emph{The inertia} of $A$ is the triple $\In(A)=(\pi_+,\pi_-,\pi_0)$, where $\pi_+$, $\pi_-$, $\pi_0$ are, respectively, the number of positive, negative and zero eigenvalues of $A$. 
\end{definition}

Inertia plays a role in the study of SN-Trifactorization $A=BCB^T$, since the interias of $A$ and $C$ are closely connected. Let $C \in \R^{k \times k}$ be a symmetric matrix, $B \in \R^{n \times k}$, $\In(C)=(\pi_+,\pi_-, \pi_0)$, and $\In(BCB^T)=(\pi_+',\pi_-', \pi_0')$.  Then it is well known,  \cite{MR2978290}, that $\pi_+ \geq \pi_+'$ and $\pi_- \geq \pi_-'$. Furthermore, if $B \in \R^{k \times k}$ is invertible, then $\In(C)=\In(BCB^T)$, .

Returning to the completion problem assume $A$ is as in \eqref{eq:block} with $\In(A)=(\pi,\nu,\delta)$ and $\In(A_i):=(\pi_i,\nu_i,\delta_i)$, $i=1,2$. Then  $\max\{\pi_1,\pi_2\}\leq \pi$ and $\max\{\nu_1,\nu_2\}\leq \nu$,  \cite{MR636217}. Hence, 
\begin{equation}\label{eq:intertia_bound}
\spr(A_1, A_2) \geq \max\{\pi_1,\pi_2\}+\max\{\nu_1,\nu_2\}.
\end{equation}

The following lemma, borrowed from the theory of the Schur complement, sheds some light into the connections between $A_1$ and $A_2$ that guarantee low $\spr(A_1,A_2)$.

\begin{lemma}\label{prop:schur}
Let $A_1 \in \mc S_n^+$, $A_0 \in \mc S_m^+$, $N \in \R_+^{n \times m}$, and 
$$A=\npmatrix{A_1 & A_1N \\ N^TA_1 & A_0+N^TA_1N}.$$
Then $\rk(A) = \rk(A_1)+\rk(A_0)$ and $\spr(A) \leq \spr(A_1)+\spr(A_0)$. \end{lemma}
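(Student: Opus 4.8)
The plan is to verify both claims by exhibiting an explicit block factorization that realizes the Schur-complement structure, and then reading off the rank and SNT-rank bounds from it. The key observation is that $A$ can be written as a congruence of the block-diagonal matrix $A_1 \oplus A_0$ by an invertible block-triangular matrix. Concretely, set
$$L = \npmatrix{I_n & 0 \\ N^T & I_m}.$$
Then a direct multiplication gives $L(A_1 \oplus A_0)L^T = A$, since the $(1,1)$ block is $A_1$, the $(2,1)$ block is $N^TA_1$, and the $(2,2)$ block is $N^TA_1N + A_0$, matching the displayed $A$. Because $L$ is invertible (it is unit lower block-triangular), congruence preserves rank, so $\rk(A) = \rk(A_1 \oplus A_0) = \rk(A_1) + \rk(A_0)$. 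This handles the rank statement.

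For the SNT-rank bound, I would start from optimal SN-Trifactorizations $A_1 = B_1 C_1 B_1^T$ with $B_1 \in \R_+^{n \times \spr(A_1)}$, $C_1 \in \mc S^+$, and $A_0 = B_0 C_0 B_0^T$ with $B_0 \in \R_+^{m \times \spr(A_0)}$, $C_0 \in \mc S^+$. Then form
$$B := L \npmatrix{B_1 & 0 \\ 0 & B_0} = \npmatrix{B_1 & 0 \\ N^T B_1 & B_0}, \qquad C := \npmatrix{C_1 & 0 \\ 0 & C_0}.$$
Here $C$ is symmetric nonnegative by construction, and $B$ is nonnegative precisely because $N$, $B_1$, $B_0$ are all nonnegative — this is the point where nonnegativity of $N$ is essential, and it is also why the analogous lower bound would fail in general. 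A direct computation (or chaining the two displays above) gives $BCB^T = L(A_1 \oplus A_0)L^T = A$. Since $B$ has $\spr(A_1) + \spr(A_0)$ columns, this is an SN-Trifactorization of $A$ of that size, so $\spr(A) \le \spr(A_1) + \spr(A_0)$.

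The argument is essentially routine once the matrix $L$ is written down; there is no genuine obstacle, only the bookkeeping of checking that the block product $L(A_1\oplus A_0)L^T$ produces exactly the stated $A$, and noting that the congruence factor $L$ having nonnegative entries (because $N \ge 0$) is what lets us absorb it into the left factor $B$ while keeping $B$ nonnegative. One could also remark that equality need not hold in the second claim; but the statement only asks for the inequality, so nothing further is needed.
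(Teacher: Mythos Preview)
Your proof is correct and follows essentially the same approach as the paper: the paper writes down exactly your factorization $A = \npmatrix{B_1 & 0 \\ N^T B_1 & B_0}\npmatrix{C_1 & 0 \\ 0 & C_0}\npmatrix{B_1 & 0 \\ N^T B_1 & B_0}^T$ to obtain the SNT-rank bound. Your use of the invertible congruence $A = L(A_1 \oplus A_0)L^T$ to justify the rank equality is a nice addition, as the paper states that equality without spelling out the argument.
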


\begin{proof} Let $A_1= B_1C_1B_1^T$ and $A_0= B_0C_0B_0^T$. Then
$$A=\npmatrix{B_1 & 0 \\ N^TB_1 & B_0 }\npmatrix{C_1 & 0 \\ 0 & C_0} \npmatrix{B_1^T & B_1^TN \\ 0 & B_0^T},$$
showing $\spr(A) \leq \spr(A_1)+\spr(A_0)$.
\end{proof}

\begin{corollary}\label{cor:Schur}
Let $A_1 \in \mc S_n^+$, an $n \times m$ nonnegative matrix, and
$$A=\npmatrix{A_1 & A_1N \\ N^TA_1 & N^TA_1N}.$$ 
Then $\spr(A)=\spr(A_1)$. 
Furthermore, $\spr(A_1,N^TA_1N)=\spr(A_1)$ for any $N \in \R_+^{n \times m}$.
\end{corollary}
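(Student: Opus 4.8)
The plan is to obtain both equalities as immediate consequences of Lemma \ref{prop:schur}, combined with the monotonicity of $\spr$ under principal submatrices (Proposition \ref{prop:basic}, item 2) and the elementary inequalities for $\spr(A_1,A_2)$ listed above.

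For the first equality, I would apply Lemma \ref{prop:schur} in the degenerate case $A_0 = 0$ (an $m \times m$ zero block, with the convention $\spr(0) = \rk(0) = 0$ realised by empty factors $B_0, C_0$): with this choice the matrix in Lemma \ref{prop:schur} is precisely the matrix $A$ of the corollary, and the lemma yields $\spr(A) \le \spr(A_1) + \spr(0) = \spr(A_1)$. Concretely, if $A_1 = B_1 C_1 B_1^T$ realises $\spr(A_1)$, the block factorization from the proof of Lemma \ref{prop:schur} collapses to $A = \npmatrix{B_1 \\ N^T B_1} C_1 \npmatrix{B_1^T & B_1^T N}$, which one checks multiplies out to $A$. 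For the reverse inequality, $A_1$ is the leading principal submatrix of $A$, so Proposition \ref{prop:basic}(2) gives $\spr(A_1) \le \spr(A)$. Hence $\spr(A) = \spr(A_1)$.

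For the ``furthermore'' part, observe that $X := A_1 N$ is nonnegative, being a product of nonnegative matrices, and of the correct size $n \times m$, while $N^T A_1 N$ is the $m \times m$ trailing block; thus the matrix $A$ above is an admissible completion in the definition of $\spr(A_1, N^T A_1 N)$. Therefore $\spr(A_1, N^T A_1 N) \le \spr(A) = \spr(A_1)$. The opposite inequality $\spr(A_1) \le \spr(A_1, N^T A_1 N)$ is the already-noted bound $\max\{\spr(A_1), \spr(A_2)\} \le \spr(A_1, A_2)$ applied with $A_2 = N^T A_1 N$. Combining the two gives $\spr(A_1, N^T A_1 N) = \spr(A_1)$.

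There is no real obstacle in this argument; the only point needing a word of care is the degenerate application of Lemma \ref{prop:schur} with a zero Schur-complement block, i.e. agreeing that $\spr(0) = 0$ via empty factors, after which the block factorization in that lemma's proof goes through verbatim and delivers the upper bound.
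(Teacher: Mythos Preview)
Your argument is correct and is exactly the intended one: the paper states this corollary without proof, expecting the reader to derive it from Lemma~\ref{prop:schur} (in the degenerate case $A_0=0$, or equivalently via the explicit factorization $A=\npmatrix{B_1 \\ N^T B_1} C_1 \npmatrix{B_1^T & B_1^T N}$) together with Proposition~\ref{prop:basic}(2) and the inequality $\max\{\spr(A_1),\spr(A_2)\}\le\spr(A_1,A_2)$. Your handling of the $\spr(0)=0$ convention via the direct factorization is the cleanest way around that minor technicality.
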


Lemma \ref{prop:schur} gives us an approach to bound $\spr(A_1,A_2)$.  Indeed,
let $N$ be a nonnegative matrix with  $A_2-N^TA_1N$ nonnegative. Then: $\spr(A_1,A_2) \leq \spr(A_1)+\spr(A_2-N^TA_1N)$. 
The next example illustrates, that this inequality can be strict for all such $N$. 
 
 \begin{example}
Let
$$ A_1=\npmatrix{1 & 0 \\ 0 & 1} \text{ and }  A_2=\npmatrix{0 & 1 \\ 1 & 0}$$
with $\In(A_1)=(2,0,0)$ and $\In(A_2)=(1,1,0)$. 
From \eqref{eq:intertia_bound} we get $\spr(A_1,A_2) \geq 3$. This bound can be achieved by taking:
$$ X=\npmatrix{0 & 0 \\ a & \frac{1}{2a}},$$
where $a$ can be any positive number. Indeed:
$$ \npmatrix{A_1 & X \\ X^T & A_2}=\npmatrix{1 & 0 & 0 \\ 0 & 1 & \frac12 \\ 0 & 2a & 0 \\ 0 & 0 & \frac{1}{2a} } 
\npmatrix{1 & 0 & 0 \\ 0 & 0 & 1 \\ 0 & 1 & 0 }
\npmatrix{1 & 0 & 0 & 0 \\ 0 & 1 & 2a & 0 \\ 0 & \frac12 & 0 & \frac{1}{2a} } .$$
Observe, that $A_2-N^TA_1N=A_2-N^TN$ is nonnegative only for $N=0$. 

On the other hand, $\sprp(A_1,A_2) = 4$, since $\rk_+(A)=4$ for any choice of positive matrix $X$. 
Namely, suppose that 
$A=UV^T$ is a NM-Factorization of $A$ with $U,V \in \R_+^{4\times 3}$. 
Since each row of 
$A$ has a zero entry, each row of $U$ has to have one as well. Two rows of $U$ cannot have the same pattern of zeros, so at least one row of $U$ has two zeros. 
Without loss of generality we may assume that one of the rows of $U$, say $k$-th, equals $\npmatrix{1 & 0 & 0}$. It follows that the first row of $V$ equals 
the $k$-th row of $A$, so it contains three nonzero entries. It further follows that the first entry of each row of $A$, exept $k$-th, equals zero. 
So the rank of matrix $A$ with the $k$-th row omitted equals 2, a contradiction. \end{example}

The following lemma gives some insight into the case when $A$ is completed with a matrix $X$ of rank $1$.  
 
\begin{lemma}\label{prop:diag}
Let $\hat{A_1}=\npmatrix{A_1 & a \\ a^T & \alpha} \in \mc S_{n+1}^+$, and $A_2 \in \mc S_m^+$ with the Perron eigenvalue $\alpha$ and corresponding eigenvector $u$, $u^Tu=1$. Let
\begin{equation}\label{eq:diag}
 A=\npmatrix{A_1 & a u^T \\ ua^T & A_2}.
\end{equation}
Then:
\begin{enumerate}
\item $\rank (A)=\rank(\hat{A_1})+\rank(A_2)-1$ 
\item $\max\{\spr(\hat{A_1}),\spr(A_2)\} \leq \spr( A)\leq \spr(\hat{A_1})+\spr(A_2)$. 
\item If $\rank(A_2)=1$, then $\spr( A) = \spr(\hat{A_1})$.
\end{enumerate}
\end{lemma}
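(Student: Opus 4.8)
The plan is to exploit the single identity $A_2u=\alpha u$ in three slightly different ways, proving the items in the order $1,2,3$ since the factorizations built for item~1 dictate the others. For item~1, I would conjugate away the Perron direction of the second block: with $U=\npmatrix{u & U_1}\in\R^{m\times m}$ orthogonal we have $u^TA_2U_1=\alpha u^TU_1=0$, so conjugating $A$ by $\npmatrix{I_n & 0\\ 0 & U}$ (a congruence, hence rank-preserving) yields the block-diagonal matrix $\hat A_1\oplus(U_1^TA_2U_1)$, because $au^TU=a\npmatrix{1 & 0 & \cdots & 0}$. Since rank is additive over direct sums and $\rk(U_1^TA_2U_1)=\rk(U^TA_2U)-1=\rk(A_2)-1$ (using $\alpha\neq0$), the rank formula follows; the degenerate case $A_2=0$, forcing $\alpha=0$, is immediate.

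For item~2 I would establish the lower and upper bounds separately. The bound $\spr(A_2)\leq\spr(A)$ is immediate from Proposition~\ref{prop:basic}, since $A_2$ is a principal submatrix of $A$. For $\spr(\hat A_1)\leq\spr(A)$, I would start from an SN-Trifactorization $A=BCB^T$, write $B=\npmatrix{B_1\\ B_2}$ with $B_1\in\R_+^{n\times k}$, and append the row $b^T:=u^TB_2\geq0$: reading off the blocks gives $B_1CB_1^T=A_1$, $B_1CB_2^T=au^T$, $B_2CB_2^T=A_2$, whence $B_1Cb=B_1CB_2^Tu=a$ and $b^TCb=u^TA_2u=\alpha$, so $\hat A_1=\npmatrix{B_1\\ b^T}C\npmatrix{B_1^T & b}$ is an SN-Trifactorization of the same size. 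For the upper bound $\spr(A)\leq\spr(\hat A_1)+\spr(A_2)$ I would assemble the factors by hand: from $\hat A_1=\hat B_1\hat C_1\hat B_1^T$ with $\hat B_1=\npmatrix{B_1\\ b^T}$ ($p:=\spr(\hat A_1)$ columns) and $A_2=B_2C_2B_2^T$ ($q:=\spr(A_2)$ columns), set $w:=\alpha^{-1}C_2B_2^Tu\geq0$, which satisfies $B_2w=\alpha^{-1}A_2u=u$, and take
$$B=\npmatrix{B_1 & 0\\ 0 & B_2},\qquad C=\npmatrix{\hat C_1 & (\hat C_1b)w^T\\ w(\hat C_1b)^T & C_2}\in\mc S_{p+q}^+.$$
Then $BCB^T$ has diagonal blocks $A_1,A_2$ and off-diagonal block $B_1(\hat C_1b)(B_2w)^T=au^T$, so it equals $A$. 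When $\alpha=0$ one has $A_2=0$ and the identity $A=\npmatrix{I_n & 0\\ 0 & u}\hat A_1\npmatrix{I_n & 0\\ 0 & u}^T$ settles things directly.

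Item~3 is the degenerate instance of that construction: if $\rk(A_2)=1$ then, $A_2$ being symmetric and entrywise nonnegative, one has $A_2=\alpha uu^T$ with $\alpha>0$ and $u\geq0$, so $A=\npmatrix{I_n & 0\\ 0 & u}\hat A_1\npmatrix{I_n & 0\\ 0 & u}^T$ exactly; any SN-Trifactorization $\hat A_1=\hat B_1\hat C_1\hat B_1^T$ transports to $A=\big(\npmatrix{I_n & 0\\ 0 & u}\hat B_1\big)\hat C_1\big(\npmatrix{I_n & 0\\ 0 & u}\hat B_1\big)^T$ with nonnegative first factor, giving $\spr(A)\leq\spr(\hat A_1)$, and the reverse inequality is the lower bound from item~2. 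The step I expect to need the most care is the coupling block $(\hat C_1b)w^T$ in the upper bound of item~2: it must be simultaneously nonnegative and reproduce the rank-one block $au^T=B_1\hat C_1b\,u^T$, and this works precisely because $\alpha>0$ lets us realise the Perron vector as a \emph{nonnegative} combination $u=B_2w$ of the columns of $B_2$. The tempting shortcut — peeling off $\alpha uu^T$ and applying Lemma~\ref{prop:schur} to $\hat A_1$ and $A_2-\alpha uu^T$ — fails because $A_2-\alpha uu^T$ need not be entrywise nonnegative, which is exactly what forces the hands-on construction.
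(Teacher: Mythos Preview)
Your proof is correct and follows essentially the same approach as the paper: the lower bounds in item~2 (via the principal-submatrix inequality and the ``append the row $u^TB_2$'' trick), the upper bound in item~2 (your coupling block $(\hat C_1 b)w^T$ with $w=\alpha^{-1}C_2B_2^Tu$ is exactly the paper's $\frac{1}{\alpha}C_1b_1u^TB_2C_2$), and the transport factorization in item~3 all coincide with the paper's constructions. The only notable difference is item~1, where the paper simply cites an external reference for the rank identity, whereas you supply a self-contained argument via orthogonal congruence by $I_n\oplus U$; your argument is correct and arguably more satisfying in context, and your explicit handling of the degenerate case $\alpha=0$ is a detail the paper omits.
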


\begin{proof}
\begin{enumerate}
\item Proved in \cite[Lemma 5]{MR2098598}.
\item The upper bound is shown by constructing an SN-Trifactorization of $A$ from SN-Trifactorizations of $\hat{A_1}$ and $A_2$, as follows. Let
$$\hat{A_1}=\npmatrix{B_1 \\ b_1^T}C_1\npmatrix{B_1^T & b_1} \text{ and } A_2=B_2C_2B_2^T.$$ 
Then 
\begin{equation}\label{eq:SNT1}
A=\npmatrix{B_1 & 0 \\ 0 & B_2} \npmatrix{C_1 & \frac{1}{\alpha}C_1b_1 u^TB_2C_2  \\ \frac{1}{\alpha}C_2B_2^Tub_1^TC_1   & C_2} \npmatrix{B_1^T & 0 \\ 0 & B_2^T}.
\end{equation}
We have $\spr(A_2) \leq \spr(A)$ by Proposition \ref{prop:basic}. Finally, let 
$$BCB^T=\npmatrix{B_{11} \\ B_{21}}C \npmatrix{B_{11}^T & B_{21}^T}$$ be an SN-Trifactorization of $A$ that achieves $\spr(A)$, where the partition of $B$ respects the partition of $A$ in \eqref{eq:diag}. Then 
$$\hat A_1=\npmatrix{B_{11} \\ u^TB_{21}}C \npmatrix{B_{11}^T & B_{21}^Tu}$$
is the SN-Trifactorization of $\hat{A}$, proving $\spr(\hat A_1) \leq \spr(A)$.  
\item 
If $\rank(A_2)=1$, then $A_2=\alpha uu^T$. Let 
$$\hat{A_1}=\npmatrix{B_1 \\ b_1^T}C_1\npmatrix{B_1^T & b_1}.$$ 
Then
\begin{equation}\label{eq:SNT2}
A=\npmatrix{B_1 \\ ub_1^T} C_1 \npmatrix{B_1^T & b_1 u^T},
\end{equation}
hence $\spr(A) \leq \spr(\hat{A_1})$. The reverse inequality follows from 2.
\end{enumerate}
\end{proof}

\begin{corollary}
Let $A_i \in \mc S_{n_i}^+$, $i=1,2$, where $A_1$ has no zero rows, and $A_2$ has a positive eigenvector. Then $\spr^{>}(A_1,A_2) \leq \spr(A_1)+\spr(A_2).$ Moreover, if $\rank(A_2)=1$, then $\spr^{>}(A_1,A_2) =\spr(A_1,A_2)= \spr(A_1).$
\end{corollary}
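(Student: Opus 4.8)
The plan is to exhibit a \emph{single} positive completion that is optimal for both parts of the statement: take the off‑diagonal block to be a rank‑one matrix $X=au^{T}$, where $u$ is the positive eigenvector of $A_{2}$ and $a$ is a suitably scaled nonnegative combination of the columns of $A_{1}$. With such an $X$, Lemma \ref{prop:diag} and Corollary \ref{cor:Schur} do essentially all of the work.

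First I would set up the data. Because $A_{1}$ has no zero rows, the all‑ones vector $e$ satisfies $A_{1}e>0$, and in particular $e^{T}A_{1}e>0$. Because $A_{2}$ has a positive eigenvector, fix $u>0$ with $u^{T}u=1$ and $A_{2}u=\alpha u$; since $A_{2}$ is symmetric and nonnegative, a positive eigenvector must belong to the Perron eigenvalue, so $\alpha=\rho(A_{2})$, which we may take to be positive (equivalently $A_{2}\neq 0$). Now choose $t=\sqrt{\alpha/(e^{T}A_{1}e)}>0$, put $n:=te$ and $a:=A_{1}n=t\,A_{1}e>0$, so that $n^{T}A_{1}n=\alpha$. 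Set
$$\hat{A}_{1}:=\npmatrix{A_{1} & a\\ a^{T} & \alpha}=\npmatrix{A_{1} & A_{1}n\\ n^{T}A_{1} & n^{T}A_{1}n},$$
which by Corollary \ref{cor:Schur} (the single–column case $N=n$) satisfies $\spr(\hat{A}_{1})=\spr(A_{1})$.

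For the first claim I would apply Lemma \ref{prop:diag} to $\hat{A}_{1}$, $A_{2}$, $u$, $\alpha$: the resulting completed matrix is $A=\npmatrix{A_{1} & au^{T}\\ ua^{T} & A_{2}}$, whose off‑diagonal block $X=au^{T}$ is positive since $a>0$ and $u>0$. Part (2) of that lemma gives $\spr(A)\le\spr(\hat{A}_{1})+\spr(A_{2})=\spr(A_{1})+\spr(A_{2})$, and because $X>0$ this matrix is admissible in the definition of $\sprp(A_{1},A_{2})$, so $\sprp(A_{1},A_{2})\le\spr(A_{1})+\spr(A_{2})$.

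For the second claim I would reuse the same $A$. When $\rank(A_{2})=1$, part (3) of Lemma \ref{prop:diag} sharpens the estimate to $\spr(A)=\spr(\hat{A}_{1})=\spr(A_{1})$, so (again using $X>0$) $\sprp(A_{1},A_{2})\le\spr(A_{1})$ and a fortiori $\spr(A_{1},A_{2})\le\spr(A_{1})$. Combining with the elementary chain $\spr(A_{1})\le\max\{\spr(A_{1}),\spr(A_{2})\}\le\spr(A_{1},A_{2})\le\sprp(A_{1},A_{2})$ recorded earlier in the section, all three quantities equal $\spr(A_{1})$. The only genuinely delicate point is the choice of $a$: it must simultaneously be strictly positive — which is exactly where the ``no zero rows'' hypothesis is used, via $A_{1}e>0$ — and produce the prescribed corner entry $\alpha=\rho(A_{2})$ of $\hat{A}_{1}$ so that Corollary \ref{cor:Schur} applies; the rescaling $n=te$ achieves both at once, and this step is where $\alpha>0$ is needed.
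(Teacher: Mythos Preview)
Your argument is correct and follows exactly the paper's route: pick $b\in\R_+^{n_1}$ with $A_1b>0$ and $b^TA_1b=\alpha$, form $\hat A_1$, invoke Corollary~\ref{cor:Schur} to get $\spr(\hat A_1)=\spr(A_1)$, and then apply Lemma~\ref{prop:diag}. Your version is in fact slightly more explicit than the paper's, since you exhibit the concrete choice $b=te$ and flag the implicit hypothesis $\alpha>0$ (equivalently $A_2\neq 0$), which the paper's proof also needs but does not mention.
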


\begin{proof}
Choose $b \in \R_+^n$ so that $A_1b$ is positive and $b^TA_1b$ is equal to the Perron eigenvalue of $A_2$. Let
$$\hat{A_1}=\npmatrix{A_1 & A_1b \\ b^TA_1 & b^TA_1b}. $$
By Corollary \ref{cor:Schur}, $\spr(\hat{A_1})=\spr(A_1)$, and using Lemma \ref{prop:diag}, we can construct $A$ with diagonal blocks $A_1$ and $A_2$, positive off-diagonal block, and $\spr(A) \leq \spr(A_1)+\spr(A_2)$. Hence, $\spr^>(A_1,A_2)\leq \spr(A_1)+\spr(A_2)$. 
If $\rank(A_2)=1$, then the same construction produces a matrix $A$ with $\spr^>(A_1,A_2)=\spr(A_1)$.
\end{proof}

Note that \eqref{eq:SNT1}, \eqref{eq:SNT2} give explicit SN-Trifactorizations corresponding to SNT-rank estimations in Proposition \ref{prop:diag}.

\begin{example}
The matrix $A$ in \eqref{eq:AExample} can be constructed by two applications of Lemma \ref{prop:diag} to $2 \times 2$ matrices as follows. Let $$\hat A_0=\npmatrix{2 & 2 \\ 2 & 2} \text{ and } A_2=\left(
\begin{array}{cc}
 0 & 2 \\
 2 & 0
\end{array}
\right).$$
The matrix $A_2$ has the Perron eigenvalue $2$ with corresponding eigenvector $u= \frac{1}{\sqrt{2}}\left(\begin{array}{cc}  1 & 1  \end{array} \right)^T$. Joining $\hat A_0$ and $A_2$ as in Proposition \ref{prop:diag} we get $$\hat A_1=\left(
\begin{array}{ccc}
 2 & \sqrt{2} & \sqrt{2} \\
 \sqrt{2} & 0 & 2 \\
 \sqrt{2} & 2  & 0
\end{array}
\right).$$
One more application of Proposition \ref{prop:diag}, this time joining $P\hat A_1P^T$ with $A_2$, where $P$ is a permutation matrix switching the first and the third row, gives us $A$. 

By \cite{MR3134266} this proves that $A$ belongs to a family of a nonnegative matrices that are generated by a Soules matrix. If a matrix generated by a Solues matrix happens to be positive semi-definite, then it is completely positive, and its cp-rank is equal to its rank, \cite{MR2139455}. 
If a matrix generated by a Soules matrix is not positive semi-definite, then is clearly not completely positive. The matrix $A$ in this example satisfies $\rk(A)=3 < \spr(A) =4$, hence we note that a symmetric matrix generated by Solues matrix can have SNT-rank bigger than rank. 

Furthermore, we have $\rk(A_1)=\spr(A_1)=2$, $\spr(A_2)=2$ and $\spr(A)=4$ from Example \ref{ex:4.1}. This shows that the upper bound in item 2. of Lemma \ref{prop:diag} cannot be improved, in general. 
\end{example}

Our last example generalizes Example \ref{ex:2}.

\begin{example}\label{ex:3}
Let us write the matrix $A$ from Example \ref{ex:2} as $$A=\hat{A}_1 = \left(
\begin{array}{cc}
 A_1 & a  \\
 a^T & 1  
\end{array}
\right), \text{ where } A_1 = \left(\begin{array}{ccc}
 1 & 1 & 2   \\
 1 & 0 & 1   \\
 2 & 1 & 0   
\end{array}
\right) \text{ and } a = \left(\begin{array}{c}
  2   \\  2   \\  1   
\end{array}
\right).$$
Let  $v \in \R^k_+$, $v^Tv=1$, and $A_2:=vv^T$. 
Inserting  $\hat{A}_1$ and $A_2$ in Lemma \ref{prop:diag} we can construct a matrix:
$$A(v)=\left(
\begin{array}{cc}
 A_1 & av^T  \\
 a^Tv & A_2  
\end{array}
\right) \in \R^{(k+3)\times(k+3)}_+$$ with
$\rk(A(v))=\rk_+(A(v)R)=3$ and $ \spr(A(v))=4$, giving us a family of matrices satisfying $\rk_+(A)<\spr(A)$. Indeed, $\spr(A(v))=\spr(\hat{A}_1)=4$ by item 3. of Lemma \ref{prop:diag}. Further,  $\rk_+(A(v))=3$ can  be deduced from the decomposition below:
$$A(v)=\left(
\begin{array}{ccl}
 0 & 1 & 1  \\
 0 & 0 & 1  \\
 1 & 0 & 0 \\
 v & v & 0_{k \times 1} \\
\end{array}
\right)\left(\begin{array}{cccl}
 2 & 1 & 0 & v^T  \\
 0 & 1 & 1 & 0_{1 \times k}  \\
 1 & 0 & 1 & 2v^T  \\
\end{array}
\right). $$
\end{example}

\section{Conclusion and open questions}

In this work we introduced the problem of SN-Trifactorization and SNT-rank, and developed some foundation results.  Since the SN-Trifactorization can be connected to both the NMF-factorization and the CP-Factori\-zation, research directions for further work can be easily found in the extensive literature on those factorizations. Here, we suggest a handful of questions that can be motivated by the results in this work.

\begin{enumerate}
\item In \cite{MR2964717}, the restricted nonnegative rank of a matrix $A$, denoted by $\rk_+^*(A)$, is defined to be the minimum value of $k$ such that there exist $U \in \R_+^{m \times k}$ and $V \in \R_+^{n \times k} $ satisfying $A=UV^T$ and $\rk(A)=\rk(U)$. Further, it is shown that $\rk_+(A)$ can be smaller than $\rk_+^*(A)$. Similarly, one can define $\spr^*(A)$ to be the minimal $k$ so that $A=BCB^T$ and $\rk(A)=\rk(B)$ for $B \in \R_+^{m \times k}$, $C=C^T \in \R^{k \times k}$. It would be interesting to explore to what extent the geometric interpretation of $\rk_+^*$ can be addapted to $\spr^*$, and find examples of matrices $A$ with $\spr(A)< \spr^*(A)$.  
\item Shitov \cite{MR3127681} found the bound $\rk_+(A) \leq \lceil \frac{6\min\{m,n\}}{7} \rceil$ for $A \in \R_+^{m \times n}$ with $\rk(A)=3$. On the other hand, Hannah and Laffey \cite{MR719859} and Barioli and Berman \cite{MR1969056} bounded the $\cp(A)$ in terms of $\rk(A)$, for a completely positive matrix $A$. In particular, they showed: $\cp(A)\leq \frac{\rk(A)(\rk(A)+1)}{2}-1$.
Proposition \ref{prop:basic1} implies that $\spr(A)$ has the same upper bound if $A$ is completely positive. From our discussion above it is clear that bounding $\spr(A)$ solely in terms of $\rk(A)$ for general symmetric nonnegative matrices is not possible. However, it would be interesting to explore if bounds similar to the one derived in \cite{MR3127681} can be found for $\spr(A)$. 
\item Starting with an integer (rational) matrix $A \in \mc S_n^+$, we may ask for SN-Trifactorization $A=BCB^T$, where $B$ and/or $C$ have integer (rational) entries.  This issue is touched upon in Example \ref{ex:4.1}, but is not thoroughly explored in this work. 
\end{enumerate}

\section*{Acknowledgments}
Damjana Kokol Bukov\v{s}ek acknowledges financial support from the Slovenian Research Agency (research core funding No. P1-0222).

\bibliographystyle{amsplain}
\bibliography{biblio}

\providecommand{\bysame}{\leavevmode\hbox to3em{\hrulefill}\thinspace}
\providecommand{\MR}{\relax\ifhmode\unskip\space\fi MR }
\providecommand{\MRhref}[2]{%
  \href{http://www.ams.org/mathscinet-getitem?mr=#1}{#2}
}
\providecommand{\href}[2]{#2}
\begin{thebibliography}{10}

\bibitem{pmlr-v28-arora13}
Sanjeev Arora, Rong Ge, Yonatan Halpern, David Mimno, Ankur Moitra, David
  Sontag, Yichen Wu, and Michael Zhu, \emph{A practical algorithm for topic
  modeling with provable guarantees}, Proceedings of the 30th International
  Conference on Machine Learning, vol.~28, 2013, pp.~280--288.

\bibitem{MR1969056}
F.~Barioli and A.~Berman, \emph{The maximal cp-rank of rank {$k$} completely
  positive matrices}, Linear Algebra Appl. \textbf{363} (2003), 17--33, Special
  issue on nonnegative matrices, $M$-matrices and their generalizations
  (Oberwolfach, 2000). \MR{1969056}

\bibitem{MR2563025}
LeRoy~B. Beasley and Thomas~J. Laffey, \emph{Real rank versus nonnegative
  rank}, Linear Algebra Appl. \textbf{431} (2009), no.~12, 2330--2335.
  \MR{2563025}

\bibitem{MR3414584}
Abraham Berman, Mirjam D\"{u}r, and Naomi Shaked-Monderer, \emph{Open problems
  in the theory of completely positive and copositive matrices}, Electron. J.
  Linear Algebra \textbf{29} (2015), 46--58. \MR{3414584}

\bibitem{MR1986666}
Abraham Berman and Naomi Shaked-Monderer, \emph{Completely positive matrices},
  World Scientific Publishing Co., Inc., River Edge, NJ, 2003. \MR{1986666}

\bibitem{MR3365273}
Immanuel~M. Bomze, Peter J.~C. Dickinson, and Georg Still, \emph{The structure
  of completely positive matrices according to their {CP}-rank and
  {CP}-plus-rank}, Linear Algebra Appl. \textbf{482} (2015), 191--206.
  \MR{3365273}

\bibitem{MR636217}
Bryan~E. Cain and E.~Marques~de S\'{a}, \emph{The inertia of a {H}ermitian
  matrix having prescribed complementary principal submatrices}, Linear Algebra
  Appl. \textbf{37} (1981), 161--171. \MR{636217}

\bibitem{MR1230356}
Joel~E. Cohen and Uriel~G. Rothblum, \emph{Nonnegative ranks, decompositions,
  and factorizations of nonnegative matrices}, Linear Algebra Appl.
  \textbf{190} (1993), 149--168. \MR{1230356}

\bibitem{MR657202}
D.~de~Caen, D.~A. Gregory, and N.~J. Pullman, \emph{The {B}oolean rank of
  zero-one matrices}, Proceedings of the {T}hird {C}aribbean {C}onference on
  {C}ombinatorics and {C}omputing ({B}ridgetown, 1981), Univ. West Indies, Cave
  Hill Campus, Barbados, 1981, pp.~169--173. \MR{657202}

\bibitem{ding-he-simon}
Chris Ding, Xiaofeng He, and Horst~D. Simon, \emph{On the equivalence of
  nonnegative matrix factorization and spectral clustering}, Proceedings of the
  2005 SIAM International Conference on Data Mining, 2005, pp.~606--610.

\bibitem{NIPS2003_1843e35d}
David Donoho and Victoria Stodden, \emph{When does non-negative matrix
  factorization give a correct decomposition into parts?}, Advances in Neural
  Information Processing Systems (S.~Thrun, L.~Saul, and B.~Sch\"{o}lkopf,
  eds.), vol.~16, MIT Press, 2004.

\bibitem{MR3134266}
Richard Ellard and Helena \v{S}migoc, \emph{Constructing new realisable lists
  from old in the {NIEP}}, Linear Algebra Appl. \textbf{440} (2014), 218--232.
  \MR{3134266}

\bibitem{8653529}
Xiao Fu, Kejun Huang, Nicholas~D. Sidiropoulos, and Wing-Kin Ma,
  \emph{Nonnegative matrix factorization for signal and data analytics:
  Identifiability, algorithms, and applications}, IEEE Signal Processing
  Magazine \textbf{36} (2019), no.~2, 59--80.

\bibitem{8338424}
Xiao Fu, Kejun Huang, Nicholas~D. Sidiropoulos, Qingjiang Shi, and Mingyi Hong,
  \emph{Anchor-free correlated topic modeling}, IEEE Transactions on Pattern
  Analysis and Machine Intelligence \textbf{41} (2019), no.~5, 1056--1071.

\bibitem{NMFGillis}
Nicolas Gillis, \emph{Nonnegative matrix factorization}, SIAM, 2020.

\bibitem{MR2964717}
Nicolas Gillis and Fran\c{c}ois Glineur, \emph{On the geometric interpretation
  of the nonnegative rank}, Linear Algebra Appl. \textbf{437} (2012), no.~11,
  2685--2712. \MR{2964717}

\bibitem{MR719859}
John Hannah and Thomas~J. Laffey, \emph{Nonnegative factorization of completely
  positive matrices}, Linear Algebra Appl. \textbf{55} (1983), 1--9.
  \MR{719859}

\bibitem{thesis-ho}
Ngoc-Diep Ho, \emph{Nonnegative matrix factorization algorithms and
  applications}, 2008, PhD thesis, Université catholique de Louvain.

\bibitem{MR2978290}
Roger~A. Horn and Charles~R. Johnson, \emph{Matrix analysis}, second ed.,
  Cambridge University Press, Cambridge, 2013. \MR{2978290}

\bibitem{MR2905148}
Pavel Hrube\v{s}, \emph{On the nonnegative rank of distance matrices}, Inform.
  Process. Lett. \textbf{112} (2012), no.~11, 457--461. \MR{2905148}

\bibitem{LSNature}
D.~Daniel Lee and H.~Sebastian Seung, \emph{Learning the parts of objects by
  non-negative matrix factorization}, Nature \textbf{401} (1999), 788--791.

\bibitem{leplat2021exact}
Valentin Leplat, Yurii Nesterov, Nicolas Gillis, and François Glineur,
  \emph{Exact nonnegative matrix factorization via conic optimization}, 2021.

\bibitem{MR2653832}
Matthew~M. Lin and Moody~T. Chu, \emph{On the nonnegative rank of {E}uclidean
  distance matrices}, Linear Algebra Appl. \textbf{433} (2010), no.~3,
  681--689. \MR{2653832}

\bibitem{https://doi.org/10.1002/env.3170050203}
Pentti Paatero and Unto Tapper, \emph{Positive matrix factorization: A
  non-negative factor model with optimal utilization of error estimates of data
  values}, Environmetrics \textbf{5} (1994), no.~2, 111--126.

\bibitem{MR874114}
Alexander Schrijver, \emph{Theory of linear and integer programming},
  Wiley-Interscience Series in Discrete Mathematics, John Wiley \& Sons, Ltd.,
  Chichester, 1986, A Wiley-Interscience Publication. \MR{874114}

\bibitem{MR2139455}
Naomi Shaked-Monderer, \emph{A note on the {CP}-rank of matrices generated by
  {S}oules matrices}, Electron. J. Linear Algebra \textbf{12} (2004/05), 2--5.
  \MR{2139455}

\bibitem{MR3273762}
Naomi Shaked-Monderer, Abraham Berman, Immanuel~M. Bomze, Florian Jarre, and
  Werner Schachinger, \emph{New results on the cp-rank and related properties
  of co(mpletely )positive matrices}, Linear Multilinear Algebra \textbf{63}
  (2015), no.~2, 384--396. \MR{3273762}

\bibitem{MR3127681}
Yaroslav Shitov, \emph{An upper bound for nonnegative rank}, J. Combin. Theory
  Ser. A \textbf{122} (2014), 126--132. \MR{3127681}

\bibitem{MR3720357}
\bysame, \emph{The nonnegative rank of a matrix: hard problems, easy
  solutions}, SIAM Rev. \textbf{59} (2017), no.~4, 794--800. \MR{3720357}

\bibitem{MR3918551}
\bysame, \emph{Euclidean distance matrices and separations in communication
  complexity theory}, Discrete Comput. Geom. \textbf{61} (2019), no.~3,
  653--660. \MR{3918551}

\bibitem{MR2252778}
Bart Vanluyten, Jan~C. Willems, and Bart De~Moor, \emph{Recursive filtering
  using quasi-realizations}, Positive systems, Lect. Notes Control Inf. Sci.,
  vol. 341, Springer, Berlin, 2006, pp.~367--374. \MR{2252778}

\bibitem{MR2098598}
Helena \v{S}migoc, \emph{The inverse eigenvalue problem for nonnegative
  matrices}, Linear Algebra Appl. \textbf{393} (2004), 365--374. \MR{2098598}

\end{thebibliography}

\end{document}